\documentclass{amsart}

\usepackage{amsmath,amssymb,amsthm,amsfonts,mathrsfs,mathtools}
\usepackage[frame,cmtip,arrow,matrix,line,graph,curve]{xy}
\usepackage{mathabx}
\usepackage[colorlinks,backref=page,citecolor=blue]{hyperref}
\usepackage{tikz}
\usepackage{enumerate}
\usepackage{comment}
\usepackage{enumitem}
\usepackage{todonotes}
\usepackage{tikzit}
\usepackage[capitalise]{cleveref}
\usepackage{bbm}

\tikzstyle{box}=[shape=rectangle, text height=1.5ex, text depth=0.25ex, yshift=0.5mm, fill=white, draw=black, minimum height=5mm, yshift=-0.5mm, minimum width=5mm, font={\small}]
\tikzstyle{gate}=[shape=rectangle, text height=1.5ex, text depth=0.25ex, yshift=0.5mm, fill=white, draw=black, minimum height=5mm, yshift=-0.5mm, minimum width=5mm, font={\small}, tikzit category=circuit]
\tikzstyle{big gate}=[shape=rectangle, text height=1.5ex, text depth=0.25ex, yshift=0.5mm, fill=white, draw=black, minimum height=10mm, yshift=-0.5mm, minimum width=5mm, font={\small}, tikzit category=circuit]
\tikzstyle{Z dot}=[inner sep=0mm, minimum size=2mm, shape=circle, draw=black, fill={rgb,255: red,221; green,255; blue,221}, tikzit category=zx]
\tikzstyle{Z phase dot}=[minimum size=5mm, font={\footnotesize\boldmath}, shape=rectangle, rounded corners=2mm, inner sep=0.2mm, outer sep=-2mm, scale=0.8, tikzit shape=circle, draw=black, fill={rgb,255: red,221; green,255; blue,221}, tikzit draw=blue, tikzit category=zx]
\tikzstyle{X dot}=[Z dot, shape=circle, draw=black, fill={rgb,255: red,255; green,136; blue,136}, tikzit category=zx]
\tikzstyle{X phase dot}=[Z phase dot, tikzit shape=circle, tikzit draw=blue, fill={rgb,255: red,255; green,136; blue,136}, font={\footnotesize\boldmath}, tikzit category=zx]
\tikzstyle{hadamard}=[fill=yellow, draw=black, shape=rectangle, inner sep=0.6mm, minimum height=1.5mm, minimum width=1.5mm, tikzit category=zx]
\tikzstyle{paulibox}=[fill={rgb,255: red,221; green,221; blue,255}, draw=black, shape=rectangle, inner sep=0.6mm, minimum height=5mm, minimum width=5mm, font={\footnotesize}, text height=1.5ex, text depth=0.25ex, tikzit category=zx]
\tikzstyle{vertex}=[inner sep=0mm, minimum size=1mm, shape=circle, draw=black, fill=black, tikzit category=misc]
\tikzstyle{vertex set}=[inner sep=0mm, minimum size=2mm, shape=circle, draw=black, fill=white, font={\footnotesize\boldmath}, tikzit category=misc]
\tikzstyle{small black dot}=[fill=black, draw=black, shape=circle, inner sep=0pt, minimum width=1.2mm, tikzit category=circuit]
\tikzstyle{cnot ctrl}=[fill=black, draw=black, shape=circle, inner sep=0pt, minimum width=1.2mm, tikzit category=circuit]
\tikzstyle{cnot targ}=[fill=white, draw=white, shape=circle, tikzit category=circuit, label={center:$\oplus$}, inner sep=0pt, minimum width=2.1mm, tikzit fill={rgb,255: red,102; green,204; blue,255}, tikzit draw=black]
\tikzstyle{ket}=[fill=white, draw=black, shape=regular polygon, regular polygon sides=3, regular polygon rotate=-30, scale=0.7, inner sep=1pt, tikzit category=circuit, tikzit shape=rectangle, tikzit fill=green]
\tikzstyle{bra}=[fill=white, draw=black, shape=regular polygon, regular polygon sides=3, regular polygon rotate=30, scale=0.7, inner sep=1pt, tikzit category=circuit, tikzit shape=rectangle, tikzit fill=red]
\tikzstyle{scalar}=[shape=rectangle, text height=1.5ex, text depth=0.25ex, yshift=0.5mm, fill=white, draw=black, minimum height=5mm, yshift=-0.5mm, minimum width=5mm, font={\small}]
\tikzstyle{clabel}=[fill=white, draw=none, shape=rectangle, tikzit fill={rgb,255: red,56; green,255; blue,242}, font={\footnotesize}, inner sep=1pt, tikzit category=labels]
\tikzstyle{empty diagram}=[draw={gray!40!white}, dashed, shape=rectangle, minimum width=1cm, minimum height=1cm, tikzit category=misc]
\tikzstyle{white dot}=[Z dot]
\tikzstyle{gray dot}=[X dot]
\tikzstyle{white phase dot}=[Z phase dot]
\tikzstyle{gray phase dot}=[X phase dot]
\tikzstyle{small hadamard}=[hadamard]

\tikzstyle{simple}=[-]
\tikzstyle{hadamard edge}=[-, dashed, dash pattern=on 2pt off 0.5pt, thick, draw={rgb,255: red,68; green,136; blue,255}]
\tikzstyle{box edge}=[-, dashed, dash pattern=on 2pt off 0.5pt, thick, draw={rgb,255: red,203; green,192; blue,225}]
\tikzstyle{brace edge}=[-, tikzit draw=blue, decorate, decoration={brace,amplitude=1mm,raise=-1mm}]
\tikzstyle{diredge}=[->]
\tikzstyle{double edge}=[-, double, shorten <=-1mm, shorten >=-1mm, double distance=2pt]
\tikzstyle{gray edge}=[-, {gray!60!white}]
\tikzstyle{pointer edge}=[->, very thick, gray]
\tikzstyle{boldedge}=[-, line width=1.6pt, shorten <=-0.17mm, shorten >=-0.17mm]
\tikzstyle{implies}=[-implies, double, double distance=2pt]

\input{graph.tikzdefs}

\def\dd{\hbox{-}}

\usepackage[margin=1in]{geometry}

\theoremstyle{definition}

\newtheorem{theorem}{Theorem}[section]

\newtheorem{lemma}[theorem]{Lemma}

\newtheorem{corollary}[theorem]{Corollary}

\newtheorem*{theorem*}{Theorem}
\newtheorem*{lemma*}{Lemma}
\newtheorem*{claim*}{Claim}

\theoremstyle{remark}
\newtheorem{remark}[theorem]{Remark}

\def\FF{\mathbb{F}}

\def\S{\boldsymbol{S}}
\def\E{\boldsymbol{E}}
\def\B{\boldsymbol{B}}

\DeclareMathOperator{\diag}{diag}

\newcommand{\Ones}{\mathbf{1}}
\newcommand{\sm}{\setminus}
\newcommand{\set}[1]{\left\{#1\right\}}

\title{Extremal Graphs for the Lights Out Problem}

\thanks{\fontfamily{cmr}\selectfont \textsc{Princeton University, Princeton, NJ, USA}}
\thanks{\textit{E-mail addresses:} \tt{\{jc3530, sergio.cris, adivoux, varunsiva\}@princeton.edu}}
\author{Julien Codsi}
\author{Sergio Cristancho}
\author{Alexander Divoux}
\author{Varun Sivashankar}

\begin{document}

\maketitle

\begin{abstract}
Lights Out is a game played on a graph $G$ where every vertex has a light bulb that is either on or off, and pressing a vertex $v$ toggles the state of every vertex in the closed neighborhood of $v$. The goal is to find a subset of vertices $S$ such that pressing every vertex in $S$ results in all light bulbs being turned off. We study the extremal graphs for which pressing every vertex is the unique solution to the lights out problem given an initial configuration of all lights on. We show that a graph is extremal if and only if it is even and has an odd number of matchings. Furthermore, there is a bijection between the set of labeled $n$-vertex extremal graphs and the set of symmetric invertible matrices of size $n-2$ over $\FF_2$. We prove that any even graph with no cycle of length $0\pmod 3$ must be extremal. We also demonstrate operations that build larger extremal graphs from smaller ones. Along the way, we prove using the polynomial method that in any even graph, the number of matchings of a fixed size covering an odd subset of vertices is even.
\end{abstract}

\section{Introduction}

Lights Out is a game played on a graph $G$. Each vertex represents a light that is either on or off. Pressing a vertex switches the state of that vertex and all of its neighbors: lights that were on are turned off, and lights that were off are turned on. The objective of the game is to turn all lights off. Pressing a vertex twice has no effect and the vertices can be pressed in any order to achieve the same state, so a sequence of presses is determined entirely by the set of vertices pressed exactly once. Loops and parallel edges do not affect the Lights Out game, so we assume that all graphs are simple unless otherwise stated.

The Lights Out game was first studied by Pelletier \cite{Pelletier87} as Merlin's Magic Square, from the Merlin game console distributed by Parker Brothers in 1978, which was played on a grid graph with 9 vertices. Sutner \cite{Sutner89} \cite{Sutner90} observed that the Lights Out game, or as he called it, the $\sigma$-game, is closely related to the study of additive cellular automata on graphs. The first instance of the name Lights Out in the literature is due to Anderson and Feil \cite{anderson1998turning} after the electronic game by Tiger Electronics from 1995. 

For an arbitrary initial configuration of lights, one may ask whether there is a set of vertices $S$ whose presses turn every light off. In terms of linear algebra, given the adjacency matrix $A_G \in \FF_2^{n\times n}$ and the indicator vector of the initial configuration $p \in \FF_2^n$, this is equivalent to determining whether the system $(A_G + I_n)x = p$ admits a solution over $\FF_2$. Sutner \cite[Thm. 3.1]{Sutner89} proves that an initial configuration $p$ is solvable if and only if $p$ belongs to the orthogonal complement of the kernel of the matrix $A_G+I_n$. This question has been also been studied for specific families of graphs. For example, Anderson and Feil \cite{anderson1998turning} determined the solvable initial configurations for certain grid graphs. For arbitrary graphs, Berman, Borer and Hungerbühler \cite{berman2021lights} gave a linear algebra criterion to determine when a particular state can be achieved from another. It is worth noting that finding a minimal size solution to an arbitrary initial configuration is an NP-complete problem \cite[Thm. 3.2]{Sutner88}. 

In this paper, we focus on the natural setting where all the lights are initially on; the rest of the paper will always assume this initial configuration. Sutner \cite[Thm. 3.2]{Sutner89} proves that this initial configuration always has a solution. This result is equivalent to a theorem of Gallai on partitioning the vertices of a graph into two sets whose induced subgraphs are both even, see for instance \cite{Caro96}. In fact, Gallai's theorem is equivalent to the assertion that the diagonal of any symmetric matrix over $\FF_2$ belongs to its image. Additionally, the solvability of the all lights on configuration can observed in \cite[Thm. 5]{berman2021lights}, which states that any configuration can be inverted. Recently, Mirzaei \cite{mirzaei2025inductive} also gives a proof of this fact using only combinatorial arguments.

We study the class of extremal graphs for the Lights Out game. Given a graph $G$, pressing a subset of vertices $S \subseteq V(G)$ turns all the lights off precisely when $|N[v]\cap S|$ is odd for every $v\in V(G)$. Call such a subset $S$ a \textit{hitting set}. For brevity, we refer to the parity of $|N(v) \cap S|$ as the parity of $v$ when $S$ is clear from context. A hitting set $S$ of $G$ is \textit{proper} if $S$ is a proper subset of $V(G)$. We say that $G$ is \textit{extremal} if it has no proper hitting set. In other words, a graph $G$ is extremal if hitting every vertex of $G$ is the unique solution to the Lights Out game on $G$. For a graph $G$, let $m(G)$ denote the total number of matchings in $G$ (including the empty matching). 

The paper is organized as follows.
\begin{itemize}
\item In \cref{sec-char}, we prove that a graph is extremal if and only if it is even and has an odd number of matchings. 
\item In \cref{sec-count}, we count the number of labeled extremal graphs on $n$ vertices by establishing a bijection to the number of symmetric invertible matrices of size $n-2$.
\item In \cref{sec-0cycle}, we first consider cycles, the simplest even graphs. We show that cycles are extremal if and only if they have length divisible by $3$. We also show that graphs with no cycle of length divisible by 3 must be extremal.
\item In \cref{sec-operations}, we discuss several operations to build larger extremal graphs from smaller extremal graphs. Along the way, we prove an interesting result on the parity of the number of matchings covering an odd set of vertices in even graphs. 
\end{itemize}

\section{Characterization by parity of matchings}\label{sec-char}
The goal of this section is to provide an equivalent definition of extremal graphs that is entirely independent of the Lights Out game, thereby further motivating the study of this family.

 Let $G$ be a graph with $n$ vertices, let $A_G$ be its adjacency matrix, and set $M_G = A_G + I_n$. A first observation is that $G$ is extremal if and only if $M_G \bf{1}_n = \bf{1}_n$ and $M_G$ is invertible over $\FF_2$. With this perspective, we prove the following characterization.

\begin{theorem}\label{thm:oddmatchings}
    A graph $G$ is extremal if and only if $G$ is even and has an odd number of matchings.
\end{theorem}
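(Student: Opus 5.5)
We use the observation stated just before the theorem: $G$ is extremal if and only if $M_G\mathbf{1}_n=\mathbf{1}_n$ and $M_G=A_G+I_n$ is invertible over $\FF_2$. The plan is to translate each of these two conditions into a graph-theoretic statement separately.

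The first condition is immediate. The $v$-th entry of $M_G\mathbf{1}_n$ is $|N[v]| = \deg(v)+1 \pmod 2$. So $M_G\mathbf{1}_n=\mathbf{1}_n$ holds exactly when every degree is even, that is, when $G$ is even.

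For the second condition we must show that $\det M_G \equiv m(G) \pmod 2$ for every graph $G$. This holds whether or not $G$ is even, and it finishes the proof. Over $\FF_2$, signs disappear, so
\[
\det M_G=\sum_{\sigma\in S_n}\prod_{v}(M_G)_{v,\sigma(v)} \pmod 2 .
\]
A permutation $\sigma$ contributes $1$ exactly when every non-fixed point $v$ satisfies $v\sim\sigma(v)$. Fixed points are always allowed, since the diagonal of $M_G$ is all ones. Such permutations correspond to spanning collections of vertex-disjoint pieces in $G$, where each piece is one of:
\begin{itemize}
\item a fixed vertex;
\item a $2$-cycle, i.e.\ an edge;
\item a directed cycle of length at least $3$ in $G$.
\end{itemize}

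Next we cancel the long cycles with an involution. Among the cycles of $\sigma$ of length at least $3$, take the one containing the smallest vertex and reverse its direction. This gives a different permutation, which still contributes $1$, and the map is an involution on the set of permutations that have at least one long cycle. These permutations therefore pair up and contribute $0 \pmod 2$. The remaining permutations are products of disjoint transpositions along edges, with all other vertices fixed. They are in bijection with the matchings of $G$, including the empty matching, which corresponds to the identity. Hence $\det M_G\equiv m(G)\pmod 2$, so $M_G$ is invertible if and only if $m(G)$ is odd.

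The main step requiring care is this involution: one must check that it is well defined and fixed-point-free. Reversal preserves the set of vertices in each long cycle, so the cycle chosen by the rule is the same before and after reversal, which makes the map an involution. A cycle of length at least $3$ is never equal to its reverse, so the map has no fixed points. Combining the two translations, $G$ is extremal if and only if $G$ is even and $m(G)$ is odd.
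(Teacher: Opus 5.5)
Your proposal is correct and takes essentially the paper's approach. Both arguments expand $\det M_G$ over $\FF_2$ by the Leibniz formula, cancel the permutations that contain a cycle of length at least $3$ by reversing cycles, and identify the surviving permutations with matchings; the minor differences are that you reverse only one canonically chosen long cycle where the paper reverses all cycles at once, and you state explicitly that $M_G\mathbf{1}_n=\mathbf{1}_n$ holds exactly when $G$ is even.
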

\begin{proof}
    Consider the matrix $M_G = A_G+I_n$ with coordinates $x_{v,w}\in \FF_2$ labeled by $v,w\in V(G)$. We aim to compute
    \[
        \det(M_G) = \sum\limits_{\sigma\in S_n } \prod_{v\in V(G)} x_{v,\sigma(v)}.
    \]
    Let $\sigma\in S_n$, and let $\sigma=c_1 \dots c_k$ be its unique decomposition into disjoint cyclic permutations of length $\geq 2$. If $c=(v_1 \dots v_\ell)$ is a cyclic permutation, let $\overline{c}=(v_\ell \dots v_1)$ be the reversed cyclic permutation, and define $\overline{\sigma}=\overline{c_1}\dots\overline{c_k}$. Note that $\overline{c}=c$ if and only if $c$ has length $2$, hence $\overline{\sigma}=\sigma$ if and only if $\sigma$ is composed of cycles of length $\leq 2$. 
    
    Now, the term $\prod_{v\in V(G)} x_{v,\sigma(v)}$ is equal to 1 if and only if each $c_i$ forms a cycle in $G$ (here single edges in $G$ are considered cycles of length 2). Furthermore, the term corresponding to $\sigma$ is equal to 1 if and only if the term corresponding to $\overline{\sigma}$ is equal to 1. Therefore
    \[
        \det(M_G) = \sum\limits_{\overline{\sigma}=\sigma } \prod_{v\in V(G)} x_{v,\sigma(v)} + \sum\limits_{\overline{\sigma}\neq \sigma } \prod_{v\in V(G)} x_{v,\sigma(v)} = \sum\limits_{\overline{\sigma}=\sigma } \prod_{v\in V(G)} x_{v,\sigma(v)},
    \]
    since the summation over all $\sigma\neq \overline{\sigma}$ must be zero by our previous observation. 
    
    Finally, if $\sigma$ is composed of cycles of length $\leq 2$ and its corresponding term is equal to 1, then its cycles correspond to non-adjacent edges in $G$, i.e. $\sigma$ defines a unique matching $\mu$ in $G$. Conversely, every matching $\mu$ in $G$ uniquely determines such a permutation $\sigma$. Consequently,
    \[
        \det(M_G) = m(G)\pmod2.
    \] 
    As $M_G \bf{1}_n=\bf{1}_n$ implies the graph is even, the theorem statement follows.
\end{proof}

\begin{remark}
    The proof of the previous theorem shows that a graph has a unique solution to the Lights Out game if and only if it has an odd number of matchings. The even condition guarantees that hitting all the vertices is a solution. 
\end{remark}

\section{Counting extremal graphs}\label{sec-count}

A natural starting point in the study of extremal graphs is to determine how many such graphs exist on $n$ vertices. We answer this question exactly.
Precisely, in this section, we count the number of labeled extremal graphs on the vertex set $[n]$. To do so, we make use of the following result.

\begin{lemma}\cite{macwilliams1969orthogonal}
    Let $\S^n$ denote the set of $n\times n$ symmetric invertible matrices over $\FF_2$. Then the cardinality $|\S^n|$ is given by
    \[
        |\S^n| = 2^{\binom{n+1}{2}} \prod_{i=1}^{\left\lfloor (n+1)/2 \right\rfloor}\left( 1 - 2^{\,1-2i} \right).
    \]
\end{lemma}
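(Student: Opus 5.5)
This lemma is quoted from \cite{macwilliams1969orthogonal}, so in the paper it can be cited. If I had to prove it, I would set up a recursion for $s_n=|\S^n|$ by \emph{bordering} symmetric matrices, and then check the product formula against it. First I rewrite the target. The formula is equivalent to $s_1=1$ together with
\[
s_n=2^n s_{n-1}\ \text{for $n$ even},\qquad s_n=(2^n-1)s_{n-1}\ \text{for $n$ odd}.
\]
This follows from comparing $\binom{n+1}{2}-\binom{n}{2}=n$ with the extra factor $1-2^{1-n}$, which appears exactly when $\lfloor (n+1)/2\rfloor$ increases, i.e. when $n$ is odd. As sanity checks, $s_2=4$ and $s_3=28$ agree with direct enumeration. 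So it suffices to prove this two-case recursion.

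\textbf{Bordering step.} Let $N(n,r)$ be the number of symmetric $n\times n$ matrices over $\FF_2$ of rank $r$. Extend a symmetric $M$ of rank $r$ to
\[
M'=\begin{pmatrix} M & b\\ b^T & a\end{pmatrix}.
\]
There are two cases for $b$.
\begin{itemize}
\item If $b\notin \operatorname{col}(M)$, then $\operatorname{rank}M'=r+2$ for either value of $a$. This gives $2(2^n-2^r)$ extensions.
\item If $b=Mx$, then block elimination gives $\operatorname{rank}M'=r+\operatorname{rank}(a+x^TMx)$. The scalar $x^TMx$ is well defined: for $k\in\ker M$ we have $k^TMk=0$, and the cross terms cancel in characteristic $2$ because $M$ is symmetric. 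So for each of the $2^r$ such $b$, exactly one $a$ keeps the rank $r$ and one raises it to $r+1$.
\end{itemize}
Summing over the cases gives
\[
N(n+1,r)=2^rN(n,r)+2^{r-1}N(n,r-1)+2(2^n-2^{r-2})N(n,r-2).
\]
In particular, $s_{n+1}=2^n\bigl(s_n+N(n,n-1)\bigr)$.

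\textbf{Closing the recursion.} The recursion for $s_n$ does not close by itself: it needs $N(n,n-1)$, which in turn needs $N(n,n-2)$, and so on. The step I expect to be the main obstacle is therefore finding and verifying a closed form for the full table $N(n,r)$. I would first guess it from small cases. The natural shape is a Gaussian-binomial-type count, namely the number of $r$-dimensional quotients times the number of nondegenerate forms on them, with a parity correction distinguishing alternating from non-alternating forms. I would then check that the guess satisfies the three-term recursion by induction on $n$, and specialise to $r=n$.

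\textbf{Fallback.} If guessing the table proves awkward, I would use Schur complements on the first pivot instead. If $M_{11}=1$, then $M$ is determined by the first column $b$ (free, $2^{n-1}$ choices) and the invertible symmetric complement $N+bb^T$, giving $2^{n-1}s_{n-1}$ matrices. If $M_{11}=0$, invertibility forces $b\neq 0$. Pairing coordinate $1$ with a coordinate $j$ where $b_j=1$ gives a hyperbolic-type $2\times2$ block with an $(n-2)\times(n-2)$ invertible complement. The count then requires care to avoid overcounting the choice of $j$, which I would handle by fixing $j$ canonically, say the least index with $b_j=1$, and normalising the remaining block by a change of basis. This route should yield the parity-dependent recursion above directly.
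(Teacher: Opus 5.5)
The paper does not prove this lemma; it only cites MacWilliams. Your proposal therefore has to stand on its own, and as written it does not yet. The bordering computation is correct, including the well-definedness of $x^TMx$ and the identity $s_{n+1}=2^n\bigl(s_n+N(n,n-1)\bigr)$. But you stop exactly where the content lies: you carry out neither the closed form for $N(n,r)$ nor the fallback count, so the two-case recursion is never established. There is also a small slip: for odd $n$ the new factor is $1-2^{-n}$ (from $i=(n+1)/2$), not $1-2^{1-n}$. The recursion $s_n=(2^n-1)s_{n-1}$ that you wrote is nevertheless the right one.

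The missing idea is that you need neither the full table nor any parity correction. The ``natural shape'' you describe is an exact bijection, not a guess. A symmetric form of rank $r$ on $\FF_2^n$ is the same as its radical $K$ (of dimension $n-r$) together with a nondegenerate symmetric form on $\FF_2^n/K$. Hence $N(n,r)=\binom{n}{r}_2\, s_r$, a Gaussian binomial. In particular $N(n,n-1)=(2^n-1)s_{n-1}$, and your identity becomes $s_{n+1}=2^n s_n+2^n(2^n-1)s_{n-1}$. With $s_0=s_1=1$, a two-line induction gives the parity split. If $s_n=(2^n-1)s_{n-1}$, then $s_{n+1}=2^{n+1}s_n$. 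If $s_n=2^n s_{n-1}$, then $s_{n+1}=(2^{n+1}-1)s_n$.

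Your fallback also works and yields the same recursion, with no change-of-basis normalisation needed. Take $j$ to be the least index with $b_j=1$; then the principal block $\begin{pmatrix}0&1\\1&M_{jj}\end{pmatrix}$ is invertible. Once rows $1$ and $j$ are fixed, sending the remaining $(n-2)\times(n-2)$ block to its Schur complement is a bijection on symmetric matrices. So there are exactly $2^{n-j}\cdot 2^{n-1}\cdot s_{n-2}$ such invertible $M$: a factor $2^{n-j}$ for the free entries of row $1$, and $2^{n-1}$ for $M_{jj}$ and the other $n-2$ entries of row $j$. Summing over $j$ and adding your $M_{11}=1$ count gives $s_n=2^{n-1}s_{n-1}+2^{n-1}(2^{n-1}-1)s_{n-2}$. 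This is not ``directly'' the parity-dependent recursion; it still needs the same short induction.
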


Recall that $G$ is extremal if and only if $A_G = M_G + I_n$ where $M_G$ is a symmetric invertible matrix with diagonal $\diag(M)=\mathbf{1}_n$ and $M_G\bf{1}_n = \bf{1}_n$. Therefore, we need to determine the cardinality of the following set:
\[\E^n = \{M \in \S^n\colon M\mathbf{1}_n = \mathbf{1}_n, \diag (M) = \mathbf{1}_n\}.\]

\begin{theorem}\label{thm:count}
    There is a bijection from the set of labeled extremal graphs on $n$ vertices to the set of $(n-2)\times(n-2)$ symmetric matrices $\S^{n-2}$. In particular,
    \[
        |\E^n|  = 2^{\binom{n-1}{2}} \prod_{i=1}^{\left\lfloor (n-1)/2 \right\rfloor}\left( 1 - 2^{\,1-2i} \right).
    \]
\end{theorem}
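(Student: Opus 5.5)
The plan is to pass from $\E^n$ to $\S^{n-2}$ in two steps, each a bijection that removes one dimension. Each step changes basis in $\FF_2^n$ and uses the special vector $\mathbf{1}_n$. The guiding observation is that for symmetric $M$ over $\FF_2$ we have $x^TMx=\diag(M)\cdot x$. So the condition $\diag(M)=\mathbf{1}_n$ says that the quadratic form $x\mapsto x^TMx$ is the parity functional $x\mapsto \mathbf{1}_n\cdot x$. The condition $M\mathbf{1}_n=\mathbf{1}_n$ says that $\mathbf{1}_n$ is a fixed vector of $M$, and equivalently of $M^{-1}$.

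\emph{Step 1 (eliminate the vector $\mathbf{1}_n$).} Let $P$ be the invertible matrix with columns $e_1,\dots,e_{n-1},\mathbf{1}_n$. Then
\[
P^TMP=\begin{pmatrix} M' & \mathbf{1}_{n-1}\\ \mathbf{1}_{n-1}^T & n\end{pmatrix},
\]
where $M'$ is the leading $(n-1)\times(n-1)$ principal submatrix of $M$. Here we use $M\mathbf{1}_n=\mathbf{1}_n$ to compute the last column, and $\mathbf{1}_n^T\mathbf{1}_n=n$ for the corner entry. Conversely, $M$ is recovered from $M'$: the last row and column are forced by $M\mathbf{1}_n=\mathbf{1}_n$, and $M_{nn}=1$ is then consistent. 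Therefore $M\mapsto M'$ is a bijection from $\E^n$ onto the set of symmetric $(n-1)\times(n-1)$ matrices $M'$ such that $\diag(M')=\mathbf{1}_{n-1}$ and the bordered matrix above is invertible. Combinatorially, this is just deleting the vertex $n$, since an even graph is determined by $G-n$.

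\emph{Step 2 (eliminate the diagonal constraint).} I would take the Schur complement with respect to the last coordinate, splitting by the parity of $n$. If $n$ is odd, the corner is $1$, and invertibility is equivalent to invertibility of $N=M'+J_{n-1}$, where $J$ denotes the all-ones matrix. This $N$ has zero diagonal, so it is alternating of even size $n-1$. If $n$ is even, I would instead pivot on an entry of $\mathbf{1}_{n-1}$, i.e. change basis by $e_i\mapsto e_i+e_{n-1}$ for $i<n-1$, and reduce to a matrix of size $n-2$ directly.

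In both cases the aim is to reach a symmetric matrix of size $n-2$ with no remaining constraint. The tool is to apply the same trick a second time, now to the vector that witnesses the diagonal: in Step 1's output, $\diag(M')=\mathbf{1}_{n-1}$ means $\mathbf{1}_{n-1}$ is the characteristic vector of the quadratic form of $M'$. A second bordering with respect to $\mathbf{1}_{n-1}$, with columns $e_1,\dots,e_{n-2},\mathbf{1}_{n-1}$, should strip one more row and column. After adding a suitable rank-one correction such as $J_{n-2}$ or $\mathbf{1}\mathbf{1}^T$, this should leave an arbitrary symmetric invertible $(n-2)\times(n-2)$ matrix. The map would be explicit: delete two coordinates and add a fixed correction term.

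The main obstacle is Step 2. I must check that the correction term exactly converts the constraint ``$\diag=\mathbf{1}$ and the bordered matrix is invertible'' into ``invertible, arbitrary diagonal,'' and that the construction is uniform in the parity of $n$. I would first verify the candidate map on $n=3,4$: $\E^3$ is just $K_3$, matching $|\S^1|=1$, and $\E^4$ should match $|\S^2|=4$. If a uniform map fails, I would fall back on proving equality of cardinalities. That means computing $|\E^n|$ by the same orthogonal-geometry counting used by MacWilliams, applied to the stabilizer of the pair (form, $\mathbf{1}_n$), and comparing with the cited Lemma at $n-2$, which gives the displayed formula.
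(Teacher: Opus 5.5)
Your Step 1 is correct: $M\mapsto M'$ identifies $\E^n$ with the symmetric $(n-1)\times(n-1)$ matrices $M'$ with $\diag(M')=\Ones_{n-1}$ for which $W=\left(\begin{smallmatrix} M' & \Ones_{n-1}\\ \Ones_{n-1}^T & n\end{smallmatrix}\right)$ is invertible. But Step 2, which is where the real work is, is left undone, and the form you predict for the map cannot work. A map of the form ``delete two coordinates and add a fixed correction term'' cannot be onto $\S^{n-2}$ for $n\ge 4$. Every principal $(n-2)\times(n-2)$ submatrix of $M\in\E^n$ has diagonal $\Ones_{n-2}$, so adding a fixed $C$ only produces matrices with diagonal $\Ones_{n-2}+\diag(C)$. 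Yet $\S^{n-2}$ contains matrices with different diagonals (e.g.\ $I_2$ and $\left(\begin{smallmatrix}0&1\\1&0\end{smallmatrix}\right)$).

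The analogy behind the ``second bordering'' also fails. Step 1 was clean because $\Ones_n$ is a \emph{fixed vector} of $M$, so the new border was $P^TM\Ones_n=P^T\Ones_n$. But $\Ones_{n-1}$ is only the characteristic vector of $M'$ ($x^TM'x=\Ones_{n-1}^Tx$). Bordering with columns $e_1,\dots,e_{n-2},\Ones_{n-1}$ produces the border $M'\Ones_{n-1}$, which is unconstrained. Your parity-split Schur complements land in alternating matrices. For $n$ odd, $M'+J_{n-1}$ is a correct reduction to invertible alternating matrices of size $n-1$, but you give no bijection from those to $\S^{n-2}$. For $n$ even, your pivot (after $e_i\mapsto e_i+e_{n-1}$) yields an alternating $(n-2)\times(n-2)$ matrix via a $2^{n-2}$-to-one map, so information is lost. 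The orbit-counting fallback is only named. Also, your sanity check is off: $\E^3$ consists of the edgeless graph ($M=I_3$); $K_3$ is not extremal since $J_3$ is singular.

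The missing idea is a correction that depends on the deleted row, so that the deleted data is recorded on the diagonal. For $\left(\begin{smallmatrix}1&r^T\\ r&N\end{smallmatrix}\right)$ with $\diag(N)=\Ones$, the Schur complement $N+rr^T$ has the same determinant, and $r=\diag(N+rr^T)+\Ones$.

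The paper applies this first, getting $B=N+rr^T$ with $B\Ones_{n-1}=\diag(B)$. It then conjugates so that the border becomes $\diag(X)$, and uses the key observation: if $Xu=0$ with $u\neq 0$, then $\diag(X)^Tu=u^TXu=0$, so $(0,u^T)^T$ lies in the kernel of the bordered matrix. Hence $X\in\S^{n-2}$, and conversely the corner is forced to be $\gamma=1+\diag(X)^TX^{-1}\diag(X)$.

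Your Step 1 can be finished along the same lines. Its border is $\Ones_{n-1}=\diag(M')$, so $W$ is singular whenever $M'$ is. If $M'$ is invertible, then $\det W=\det M'\,(n+\Ones^TM'^{-1}\Ones)$. Here $\Ones^TM'^{-1}\Ones=\sum_i M'_{ii}(M'^{-1})_{ii}=\operatorname{tr}(M'M'^{-1})=n-1$ modulo $2$. This uses $M'^{-1}\diag(M')=\diag(M'^{-1})$ and the cancellation of symmetric off-diagonal pairs. So $\E^n$ corresponds to the invertible symmetric $M'$ with unit diagonal, and a single Schur complement at $M'_{11}=1$ maps these bijectively onto $\S^{n-2}$.
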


\begin{remark}
    Since the number of even labeled graphs on $n$ vertices is $2^{\binom{n-1}{2}}$, the proportion of even graphs that are extremal tends to $\sim 0.419$.
\end{remark}

\begin{proof}
    Let $\B^{n-1}=\{B\in \S^{n-1}\colon B\Ones_{n-1} = \diag(B)\}$. We construct a bijection $\Phi\colon \E^n \to \S^{n-2}$ by constructing two bijections $\Phi_1: \E^n \to \B^{n-1}$ and $\Phi_2: \B^{n-1} \to \S^{n-2}$.
    
    Let us define $\Phi_1$. Let $M \in \E^n$ and write 
    \[
        M = \left( \begin{array}{c|c} 1 & r^T \\\hline r & N \end{array} \right),
    \]
    where $r\in \FF_2^{n-1}$ and $N\in \FF_2^{(n-1)\times (n-1)}$. The condition $M\Ones_n = \Ones_n$ implies that $r^T \Ones_{n-1} = 0$ and $r + N \Ones_{n-1} = \Ones_{n-1}$.
    A standard Gaussian reduction leads to
    \[
        L A L^T = \left( \begin{array}{c|c} 1 & 0 \\\hline 0 & N+rr^T \end{array} \right)\text{, where } L = \left( \begin{array}{c|c} 1 & 0 \\\hline r & I_{n-1} \end{array} \right).
    \]
    The matrix $B=  N + rr^T$ satisfies the following properties:
    \begin{enumerate}
        \item $\det(B)=1$,
        \item $\diag(B) = \diag(N) + r = \Ones_{n-1} + r$,
        \item $B\Ones_{n-1} = (N+rr^T)\Ones_{n-1} = (\Ones_{n-1}+r) + 0 = \Ones_{n-1} + r$,
    \end{enumerate}
    and thus $B\in \mathcal{B}^{n-1}$. The map $\Phi_1: M \mapsto B$ is a bijection, as $r$ is uniquely determined as $r = \diag(B) + \Ones_{n-1}$.
    
    Now let us define $\Phi_2$. Let $B \in \mathcal{B}^{n-1}$ and write 
    \[
        B = \left( \begin{array}{c|c} \alpha & y^T \\\hline y & X \end{array}\right),
    \]
    where $X\in \S^{n-2}$. The condition $B\Ones_{n-1} = \diag(B)$ implies that $y + X\Ones_{n-1} = \diag(X)$. We apply the transformation 
    \[
        P = \left(\begin{array}{c|c} 1 & \Ones_{n-2}^T \\\hline 0 & I \end{array}\right)
    \]
    to $B$ to get
    \[
        B' = P B P^T = \left(\begin{array}{c|c} \gamma & (y+X\Ones_{n-2})^T \\\hline y+X\Ones_{n-2} & X \end{array}\right) = \left(\begin{array}{c|c} \gamma & \diag(X)^T \\\hline \diag(X) & X \end{array}\right) 
    \]
    for some scalar $\gamma$. We claim $X$ is invertible. Suppose for contradiction that $\det(X)=0$. Then there exists a non-zero $u$ such that $Xu=0$. For any $v\in \FF_2^{n-2}$ we have $v^T X v =\diag(X)^T v$ since $X$ is symmetric. Consequently, $\diag(X)^T u = u^T X u = 0$. The vector $(0, u^T)^T$ then lies in the kernel of $B'$, contradicting $\det(B')=\det(B)=1$. We define $\Phi_2: \B^{n-1} \to \S^{n-2}$ by $B\mapsto X$.
    
    Let us prove $\Phi_2$ is a bijection by exhibiting an inverse. Given $X \in \S^{n-2}$, we can uniquely reconstruct $B$. Let $\gamma = 1 + \diag(X)^T X^{-1} \diag(X)$ and let 
    \[
        B' \coloneqq \left( \begin{array}{c|c} \gamma & \diag(X)^T \\\hline \diag(X) & X \end{array}\right),
    \]
    where our choice of $\gamma$ makes $B'$ invertible. Now consider $$B \coloneqq P^{-1} B' P^{-T}= \left(\begin{array}{c|c} \alpha & y^T \\\hline y & X \end{array}\right),$$ where $\alpha = \gamma + \Ones_{n-2}^T X \Ones_{n-2}$ and $y = \diag(X) + X\Ones_{n-2}$. By construction, $B$ is symmetric and our choice of $\gamma$ ensured that $B$ is invertible. It remains to show that the defined $B$ satisfies the condition $B\Ones_{n-1} = \diag(B)$. Observe that $y + X\Ones_{n-2} = \diag(X) + X\Ones_{n-2} + X\Ones_{n-2} = \diag(X)$, so $(B\Ones_{n-1})_i = \diag(B)_i$ for $i \geq 2$. Lastly, we need to show that $\alpha + y^T \Ones_{n-2} = \alpha$, so it suffices to show that $y^T \Ones_{n-2} = \diag(X)^T \Ones_{n-2} + \Ones_{n-2}^T X \Ones_{n-2} = 0$, but this holds since $X$ is symmetric. Therefore, $B \in \B^{n-1}$. This defines an inverse to $\Phi_2$, hence it is a bijection. 
\end{proof}

In the remainder of this paper, we establish several structural properties of extremal graphs.

\section{$(0\bmod3)$-cycle free graphs}\label{sec-0cycle}

In this section, we establish a sufficient condition for an even graph to be extremal by restricting the length of its cycles. We begin by classifying when a cycle, the simplest even graph, is extremal.

\begin{theorem}\label{thm:cycles}
A cycle is extremal if and only if it does not have length divisible by $3$.
\end{theorem}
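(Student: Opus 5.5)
Since every cycle $C_n$ ($n\ge 3$) is $2$-regular and hence even, \cref{thm:oddmatchings} reduces the statement to a parity count: $C_n$ is extremal if and only if $m(C_n)$ is odd. So the plan is to compute $m(C_n) \bmod 2$ and show it is odd exactly when $3\nmid n$.

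First handle paths. Let $p_k = m(P_k)$ be the number of matchings of the path on $k$ vertices, with $p_0=p_1=1$. Conditioning on whether the last vertex is matched gives the Fibonacci recurrence $p_k = p_{k-1}+p_{k-2}$. Reducing mod $2$, the sequence $p_0,p_1,p_2,\dots$ is $1,1,0,1,1,0,\dots$, which is periodic with period $3$. Hence $p_k$ is even exactly when $k\equiv 2 \pmod 3$.

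Next handle the cycle. Fix an edge $e=uv$ of $C_n$. A matching either avoids $e$, and is then a matching of the path $C_n - e \cong P_n$, or contains $e$, and is then $e$ together with a matching of $C_n-\{u,v\}\cong P_{n-2}$. Therefore $m(C_n)=p_n+p_{n-2}$; these are the Lucas numbers. Reducing mod $2$ and using the period-$3$ pattern:
\begin{itemize}
\item if $n\equiv 0$, then $p_n+p_{n-2}\equiv 1+1=0$;
\item if $n\equiv 1$, then $p_n+p_{n-2}\equiv 1+0=1$;
\item if $n\equiv 2$, then $p_n+p_{n-2}\equiv 0+1=1$.
\end{itemize}
So $m(C_n)$ is odd if and only if $3\nmid n$, which combined with \cref{thm:oddmatchings} proves the theorem.

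There is no real obstacle here. The only care needed is the base indexing of the path recurrence and the small case $n=3$, where $C_3-\{u,v\}=P_1$; this is consistent with the formula, since $m(C_3)=4$ is even.

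As an independent check, one could instead exhibit a proper hitting set directly when $3\mid n$: pressing every third vertex toggles each vertex exactly once. For the converse one could use circulant determinants, but the matching count above already covers both directions.
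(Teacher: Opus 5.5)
Your argument is correct, but it takes a different route from the paper. You go through \cref{thm:oddmatchings}. Since $C_n$ is $2$-regular, extremality reduces to the parity of $m(C_n)=p_n+p_{n-2}$, and the period-$3$ pattern of the Fibonacci numbers mod $2$ settles both directions at once. There is no circularity, because \cref{thm:oddmatchings} is proved independently via the determinant expansion.

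The paper does not count matchings here. It works directly with hitting sets, using three local rules: a hitting set is dominating; $v_i,v_{i+2}\in S$ forces $v_{i+1}\in S$; and $v_i,v_{i+1}\in S$ forces $v_{i+2}\in S$. From these it shows that a proper hitting set must be a stable set in which every outside vertex has exactly one neighbor in $S$. The closed neighborhoods of its vertices therefore partition the cycle into blocks of three consecutive vertices.

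Your method is shorter and more mechanical. It amounts to evaluating $\det(A_{C_n}+I_n)$ over $\FF_2$ by a linear recurrence, which adapts readily to other recursively built families. The paper's argument is self-contained and gives structural information that a parity count does not: the proper hitting sets of $C_{3m}$ are exactly the three ``every third vertex'' sets. That picture of a cycle of length divisible by $3$ as the obstruction is what motivates \cref{thm:zm3c}, whose proof lifts an alternating cycle of length $k$ to a cycle of length $3k/2$ in $G$.
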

\begin{proof}
Let us denote the vertices of $C_k$ by $v_1,\dots,v_k$ such that $v_iv_j\in E(C_k)$ if $i-j = \pm 1 \pmod{k}$.
    Note that for any hitting set $S$,
    \begin{enumerate}
        \item $S$ needs to be dominating,
        \item if $v_i,v_{i+2} \in S$ then $v_{i+1}\in S$ (where the indices are modulo $k$), and
        \item if $v_i,v_{i+1} \in S$ then $v_{i+2}\in S$ (where the indices are modulo $k$).
    \end{enumerate}
If $|S|\neq k$, then (3) implies that $S$ is a stable set. Moreover, (1) and (2) imply that if $v_j\notin S$ then $|N(v_j)\cap S| =1$. By combining these remarks and centering balls of radius one at the vertices in $S$, there must exist a packing of balls that is a cover of $C_k$. Since each balls cover exactly $3$ vertices, we must have that $k$ is divisible by $3$. If $k$ is divisible by $3$ then $S=\set{v_2,v_5,\dots,v_{k-1}}$ is a hitting set.
\end{proof}

\cref{thm:cycles} seems to indicate that cycles with length divisible by $3$ are at the crux of what can make an even graph non-extremal. The following theorem captures this idea.

\begin{theorem}\label{thm:zm3c}
    If $G$ is an even graph with no cycle of length divisible by $3$, then $G$ is extremal.
\end{theorem}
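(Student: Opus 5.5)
The plan is to argue by contradiction, directly from the Lights Out formulation rather than through \cref{thm:oddmatchings}. Suppose $G$ is even with no cycle of length divisible by $3$, and let $S$ be a proper hitting set. Since $G$ is even, $V(G)$ is also a hitting set. Adding the two systems $M_G\Ones_n=\Ones_n$ and $M_G\Ones_S=\Ones_n$ shows that $T=V(G)\sm S$ is nonempty and satisfies $|N[v]\cap T|$ even for every $v$.

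Next I would translate this into local degree conditions.
\begin{itemize}
\item Every $v\in T$ has an odd number of neighbours in $T$. Since $\deg(v)$ is even, $v$ also has an odd number, hence at least one, of neighbours in $S$.
\item Every $v\in S$ has an even number of neighbours in $T$, and hence also an even number of neighbours in $S$.
\end{itemize}
This mirrors the cycle case of \cref{thm:cycles}, where the kernel vectors have the repeating pattern $T,T,S$.

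The core step is to build a non-backtracking walk following that pattern: $T\to T\to S\to T\to T\to S\to\cdots$. Start at $a_0\in T$ and step to a $T$-neighbour $a_1$. From $a_1$, step to an $S$-neighbour $a_2$, which exists by the first condition. From $a_2$, step to a $T$-neighbour other than $a_1$. This is possible because $a_2$ has an even, nonzero number of $T$-neighbours, so at least two. From that vertex, step to any $T$-neighbour; this is not a backtrack, since we arrived along an $S$-edge. Continuing, each step is always available and never immediately reverses the previous edge. Every vertex of $S$ on the walk sits at a position $\equiv 2 \pmod 3$. Since $G$ is finite, vertices must repeat, and a closed sub-walk between two occurrences of the same $S$-vertex has length divisible by $3$.

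The main obstacle is passing from such a closed walk to a genuine cycle of length $0\pmod 3$. First-repetition arguments give a cycle, but if the repeated vertex lies in $T$, it may occur at two different phases ($0$ and $1$), and the length need not be divisible by $3$.

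To handle this, I would first try to make the choices canonical. Among all pairs $(T',\cdot)$ satisfying the same parity conditions, pick $T$ minimal. I would also fix, at each $T$-vertex, a designated $T$-partner and a designated $S$-partner. The aim is to show that a $T$-vertex can never be revisited in the "wrong" phase without producing a shorter cycle of length $0\pmod 3$ through chords, or contradicting minimality of $T$ (the symmetric difference with a cycle-induced pattern).

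If this fails, the fallback is to contract the $T$-edges used along the walk. Treating each consecutive pair $a_{3i}a_{3i+1}$ as a block, the walk alternates blocks and $S$-vertices. I would then apply a parity/ear-decomposition argument: the symmetric difference of closed walks whose lengths are all $0\pmod 3$ decomposes into cycles, and at least one of these cycles must again have length $0\pmod 3$.
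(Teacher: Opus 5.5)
\textbf{There is a genuine gap.} Your setup is correct and matches the paper's first step. $T=V(G)\sm S$ is nonempty, each $T$-vertex has an odd number of neighbours in $T$ and in $S$, and each $S$-vertex has an even number of neighbours in $T$. The gap is the step you flag yourself, and neither remedy closes it.

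Your walk only uses weak consequences of the parities: each $T$-vertex has a $T$-neighbour and an $S$-neighbour, and each $S$-vertex on the walk has at least two $T$-neighbours. These cannot suffice. Glue the $5$-cycle $x,y,s_2,v,s_1,x$ and the $4$-cycle $v,x',s_3,y',v$ at $v$, and put $T=\{v,x,y,x',y'\}$, $S=\{s_1,s_2,s_3\}$. This graph is even and satisfies every condition your construction uses. Moreover, $x,y,s_2,v,x',s_3,y',v,s_1,x$ is a legal non-backtracking walk of your type that closes in the correct phase with length $9$. Yet the graph has no cycle of length divisible by $3$; the true parity constraint fails only at $v$, which has two $T$-neighbours.

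The same example refutes the fallback. A closed walk of length $0\pmod 3$, even one of your type, can be the union of cycles of lengths $4$ and $5$, so ``at least one cycle has length $0\pmod 3$'' does not follow. The minimality idea has no mechanism either. The $T,T,S$ pattern along a cycle of $G$ is not in the kernel of $M_G$, so a symmetric difference with it does not give a smaller admissible $T$.

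\textbf{What is missing} is a way to use the exact parities globally. The paper recasts your walks as alternating walks in an auxiliary multigraph $H$ on $T$. Blue edges are the edges of $G[T]$, and each $s\in S$ contributes a red clique on $N(s)\cap T$, one red edge per $T\to S\to T$ detour. Every $v\in T$ then has odd blue degree $|N(v)\cap T|$ and odd red degree $\sum_{s\in N(v)\cap S}(|N(s)\cap T|-1)$. So \cref{lemma:altcyc} (Grossman--H\"aggkvist) yields an alternating cycle, i.e.\ a closed walk of your type with no repeated $T$-vertex. This is exactly your ``wrong phase'' problem, solved.

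One more step is needed that first-repetition arguments do not supply. In a \emph{shortest} alternating cycle, no $s$ serves two red edges, because two red edges inside the same clique would give a red chord and hence a shorter alternating cycle. Lifting each red edge to its path through its $s$ then gives a genuine cycle of length $3k/2$ in $G$.
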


To prove Theorem \ref{thm:zm3c}, we will make use of the following result.

\begin{lemma}\cite[Corollary 2]{GH83}\label{lemma:altcyc}
    Let $G$ be an even graph whose edges are colored red and blue so that every vertex is incident with an odd number of edges of each color. Then $G$ has an alternating cycle.
\end{lemma}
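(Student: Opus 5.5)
The plan is to argue by contradiction from a longest alternating path. Suppose $G$ has no alternating cycle. Among all alternating paths in $G$ (paths whose consecutive edges have different colors), take one of maximum length, $P=v_0v_1\dots v_k$, and let $c$ be the color of its last edge $v_{k-1}v_k$. Let $c'$ be the other color. By hypothesis $v_k$ is incident with an odd number, hence at least one, of edges of color $c'$, and $v_{k-1}v_k$ is not among them. By maximality of $P$, every such edge $v_kv_i$ has $v_i\in P$ with $i\le k-2$; otherwise $P$ would extend. Each such back-edge closes the cycle $C_i=v_iv_{i+1}\dots v_kv_i$, which alternates everywhere except possibly at $v_i$. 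It is alternating exactly when $v_iv_{i+1}$ has color $c$. So under our assumption every $c'$-back-edge at $v_k$ is a \emph{clash}: it lands at a vertex $v_i$ whose forward path edge $v_iv_{i+1}$ also has color $c'$.

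Next I would exploit clashes with Pósa-type rotations. Take a clash back-edge $v_kv_i$ with $i\ge1$. Since $P$ alternates and $v_iv_{i+1}$ has color $c'$, the edge $v_{i-1}v_i$ has color $c$. Hence
\[
v_0\dots v_iv_kv_{k-1}\dots v_{i+1}
\]
is again an alternating path of maximum length, now ending at $v_{i+1}$ with last edge $v_{i+2}v_{i+1}$. If $i=0$, the clash means $v_0v_1$ has color $c'$, and then $v_1\dots v_kv_0$ is a maximum-length alternating path ending at $v_0$. Let $T$ be the set of all pairs (endpoint, color of last edge) obtainable from $P$ by iterating such rotations while keeping $v_0$ fixed. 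By the first paragraph applied to each rotated path, for every $(x,c_x)\in T$ all edges at $x$ of the color opposite to $c_x$ are clash back-edges into the current path.

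The final step, which I expect to be the main obstacle, is a parity count that makes this closed system impossible. I would consider the set $X$ of endpoints appearing in $T$ and count, modulo $2$, the edges of the appropriate colors incident to $X$. The hypothesis that every vertex has odd red and odd blue degree says each $x\in X$ contributes an odd number of opposite-color edges. The clash structure should instead pair these edges up, forcing an even total. Concretely, each clash edge from $x$ to $v_i$ is matched with the rotation it produces, whose new endpoint $v_{i+1}$ carries a reciprocal clash edge. Writing $|X|$ as a sum of odd terms, this pairing would give a contradiction whenever $|X|$ is odd; an alternative is to use the symmetric roles of both endpoints $v_0$ and $v_k$.

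If this bookkeeping becomes unmanageable, I would fall back to the state digraph. Its vertices are the pairs $(v,\text{color})$, with an arc $(u,\text{red})\to(v,\text{blue})$ for every red edge $uv$, and symmetrically for blue edges. Every node has odd, hence positive, out-degree. One would then take a shortest directed cycle and show, using the parity hypothesis to rule out repeated base vertices, that it projects to a simple alternating cycle of $G$.
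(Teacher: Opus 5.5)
There is a genuine gap, and it sits in the one step that would use the hypothesis. Your first two steps are sound. At the end $v_k$ of a longest alternating path, every edge of the opposite colour $c'$ is a back-edge. If there is no alternating cycle, each such edge is a clash. A clash with $i\ge 1$ rotates $P$ into another longest alternating path. But the final parity count is only described with ``should'', and as described it cannot work, because it only uses parities at the endpoint set $X$. Take the bowtie: triangles $vab$ and $vcd$ with $va,vb$ blue, $ab$ red, $vc,vd$ red and $cd$ blue. It has no alternating cycle. From the longest path $abvcd$ the rotations give only $abvdc$ and back; at the other end they give only $dcvba\leftrightarrow dcvab$. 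So your closed system exists. Every endpoint that ever occurs has exactly one red and one blue edge, and your pairing yields an even total with $|X|=2$, so there is no contradiction. The hypothesis fails only at $v$, which is an interior vertex of all these paths. Any contradiction must therefore come from parities of interior vertices, which your plan never uses, and nothing in it forces $|X|$ to be odd.

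The state-digraph fallback fails for the same reason. Add to the bowtie an alternating $8$-cycle through $v$. Now every vertex has odd red and odd blue degree, and the only alternating cycle has length $8$. Yet the shortest directed cycles of your state digraph are the closed alternating walk $v\,a\,b\,v\,c\,d\,v$ and its reverse, of length $6$, and both repeat $v$. A global parity condition cannot forbid this.

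The paper gives no proof and quotes \cite{GH83}. One way to close the gap is the Grossman--H\"aggkvist theorem: if every vertex sees both colours and there is no alternating cycle, then some vertex $z$ has every component of $G-z$ joined to it in a single colour. Let $C$ be such a component, joined to $z$ by red edges only. Applying the handshake lemma to the blue and then the red edges of $G[C\cup\{z\}]$ shows first that $|C|$ is even, and then that $z$ sends an even number of red edges to $C$. Hence $G-C$ is a smaller nonempty graph with all red and blue degrees odd and no alternating cycle, so a minimal counterexample cannot exist.

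A self-contained alternative uses this paper's own result. Replace each vertex $v$ by an edge $v^rv^b$, attach the red edges at $v^r$ and the blue ones at $v^b$, and let $M=\{v^rv^b\}$. The resulting graph is even, so by Theorem~\ref{thm:oddset} (with $S$ all vertices but one and $k$ half the order) it has an even number of perfect matchings. A second perfect matching $M'$ then gives an $M$-alternating cycle in the symmetric difference of $M$ and $M'$, and this cycle projects to an alternating cycle of $G$.
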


\begin{proof}[Proof of \cref{thm:zm3c}]
    Assume for contradiction that $G$ is not extremal. Then there exists a proper hitting set $S\subsetneq V(G)$ of $G$. Let $T = V(G)\setminus S$ be the nonempty set of vertices which are not hit. As $G$ is an even graph and $S$ is a hitting set for $G$, for any $s\in S$ we have $|N(s)\cap S|$ is even, and hence $|N(s)\cap T| = \deg(s)-|N(s)\cap S|$ is also even. Conversely, for any $t\in T$ we have $|N(t)\cap S|$ is odd, and hence $|N(t)\cap T|=\deg(t)-|N(t)\cap S|$ is also odd.

    Let $H'$ be the multigraph obtained from $G$ by adding a clique of new edges on $N(s)\cap T$ for every $s\in S$, and let $H = H'\setminus S$. Color an edge $e\in E(H)$ red if $e\in E(H')\setminus E(G)$ and blue if $e \in E(H')\cap E(G)$. Consider some arbitrary $v\in V(H)$. Since $v\in V(H) = T$, we have that $|N_G(v)\cap S|$ is odd and $|N_G(s)\cap T|$ is even for every $s\in N_G(v)\cap S$. Therefore, $v$ is incident with an odd number
    \[
        \sum_{s\in N_G(v)\cap S}(|N_G(s)\cap T|-1)
    \]
    of red edges in $H$. Furthermore, $v$ is incident with an odd number $|N_G(v)\cap T|$ of blue edges in $H$. Therefore, $H$ is an even graph whose edges are colored red and blue so that every vertex is incident with an odd number of edges of each color, so by \cref{lemma:altcyc} $H$ has an alternating red/blue cycle. Among all such cycles, choose $C$ to be the shortest.

    Let $C = v_1,v_2,\dots,v_k,v_1$ for some even $k\ge 2$, and assume without loss of generality that $v_1v_2$ is a red edge. By the definition of $H$, for any red edge $v_iv_{i+1}\in E(C)$, $i\in[k]$, there exists some $s\in S$ such that $v_i,v_{i+1}\in N_G(s)\cap T$. On the other hand, for any $s\in S$, at most one red edge of $C$ has both endpoints in $N_G(s)\cap T$. To see this, assume for contradiction that there exists some $s\in S$ and distinct $i,j\in[k]$ such that $v_iv_{i+1}$ and $v_jv_{j+1}$ are both red edges in $C$ and $v_i, v_{i+1}, v_j, v_{j+1}\in N_G(s)\cap T$. Assume without loss of generality that $i < j$. Since $C$ is an alternating cycle, $j\neq i+1$, and since $H[N_G(s)\cap T]$ is a clique, $v_iv_{j+1}\in E(H)$. But now the cycle $C' = v_1,v_2,\dots,v_i,v_{j+1},\dots,v_k,v_1$ is a shorter alternating cycle in $H$, contradicting the minimality of $C$. Therefore, for any red edge $v_iv_{i+1}\in E(C)$, $i\in[k]$, there exists a unique $s_{i,i+1}\in S$ such that $v_i,v_{i+1}\in N_G(s_{i,i+1})\cap T$. Furthermore, by the definition of $H$, any blue edge in $C$ corresponds to a unique edge of $G[T]$. 
    
    Lifting $C$ back to $G$, we have that
    \[
        C'' = v_1,s_{1,2},v_2,v_3,s_{3,4},v_4,\dots, v_{k-1}, s_{k-1, k}, v_kv_1
    \]
    is a cycle of length $3k/2$ in $G$, contradicting that $G$ has no cycle of length divisible by $3$.
\end{proof}

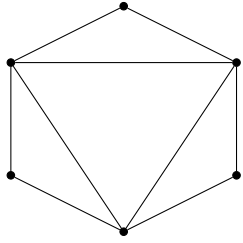
\begin{figure}[h!]
    \centering
    \scalebox{1}{\begin{tikzpicture}
	\begin{pgfonlayer}{nodelayer}
		\node [style=vertex] (0) at (0, 3) {};
		\node [style=vertex] (1) at (3, 1.5) {};
		\node [style=vertex] (2) at (-3, 1.5) {};
		\node [style=vertex] (3) at (-3, -1.5) {};
		\node [style=vertex] (4) at (3, -1.5) {};
		\node [style=vertex] (5) at (0, -3) {};
		\node [style=none] (6) at (3.5, 1.5) {};
		\node [style=none] (7) at (3.5, -1.5) {};
		\node [style=none] (8) at (0, -3.5) {};
		\node [style=none] (9) at (-3.5, -1.5) {};
		\node [style=none] (10) at (-3.5, 1.5) {};
		\node [style=none] (11) at (0, 3.5) {};
	\end{pgfonlayer}
	\begin{pgfonlayer}{edgelayer}
		\draw (1) to (4);
		\draw (4) to (5);
		\draw (5) to (3);
		\draw (3) to (2);
		\draw (2) to (0);
		\draw (0) to (1);
		\draw (2) to (5);
		\draw (5) to (1);
		\draw (1) to (2);
	\end{pgfonlayer}
\end{tikzpicture}}
    \caption{An extremal graph containing cycles of length divisible by $3$.}
    \label{fig: c6 and c3}
\end{figure}

Unfortunately, the converse of \cref{thm:cycles} does not hold. One can check that a $C_6$ with a $C_3$ inside, depicted in Figure \ref{fig: c6 and c3}, is an extremal graph. In fact, \cref{lemma:sun} can be used to verify that this graph is indeed extremal.

\section{Operations}\label{sec-operations}

To better understand graphs whose extremality does not follow from \cref{thm:zm3c}, we explore ways to build extremal graphs from smaller ones. In this section, we show that the family of extremal graphs is closed under several simple operations.

Clearly, the disjoint union of two graphs $G_1$ and $G_2$ is extremal if and only if both $G_1$ and $G_2$ are extremal. The following lemma shows that the $1$-join of two extremal graphs remains extremal.

\begin{lemma}[$1$-join]\label{lemma:1sum}
    Let $G_1$ and $G_2$ be extremal graphs, and let $x\in V(G_1)$ and $y \in V(G_2)$. Then the graph $G$ obtained by identifying $x$ and $y$ is extremal.
\end{lemma}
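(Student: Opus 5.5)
By \cref{thm:oddmatchings}, it suffices to show two things: that $G$ is even, and that $m(G)$ is odd.

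Evenness is immediate. Every vertex other than the identified vertex $z$ keeps its degree from $G_1$ or $G_2$. The vertex $z$ has degree $\deg_{G_1}(x)+\deg_{G_2}(y)$, which is a sum of two even numbers.

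For the parity of matchings, I will split the matchings $\mu$ of $G$ according to how they treat $z$. Write $G_1-x$ and $G_2-y$ for the graphs obtained by deleting the indicated vertex. There are three cases.
\begin{itemize}
\item If $z$ is uncovered by $\mu$, then $\mu$ is a union of a matching of $G_1-x$ and a matching of $G_2-y$. These contribute $m(G_1-x)\,m(G_2-y)$ matchings.
\item If $z$ is covered by an edge of $G_1$, then $\mu$ consists of a matching of $G_1$ covering $x$ together with a matching of $G_2-y$. Writing $c_1=m(G_1)-m(G_1-x)$ for the number of matchings of $G_1$ covering $x$, these contribute $c_1\,m(G_2-y)$ matchings.
\item If $z$ is covered by an edge of $G_2$, the symmetric count gives $m(G_1-x)\,c_2$, where $c_2=m(G_2)-m(G_2-y)$.
\end{itemize}
Summing the three cases,
\[
m(G)=m(G_1)m(G_2-y)+m(G_1-x)m(G_2)-m(G_1-x)m(G_2-y).
\]

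The key input, and the main point of the proof, is that in an even graph the number of matchings covering a fixed vertex is even. I would prove this by a pairing argument. Let $H$ be even and $v\in V(H)$. The matchings of $H$ covering $v$ are exactly the sets $\{vw\}\cup\nu$, where $w\in N(v)$ and $\nu$ is a matching of $H-v-w$. Counting them the other way, we fix a matching $\nu$ of $H-v$ and count the neighbours $w$ of $v$ left uncovered by $\nu$. This count is $\deg(v)$ minus the number of neighbours of $v$ covered by $\nu$.

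Summing over all matchings $\nu$ of $H-v$ gives
\[
\deg(v)\,m(H-v)-\sum_{w\in N(v)}\#\{\nu \text{ a matching of } H-v \text{ covering } w\}.
\]
The first term is even. The second term requires a parity statement for covers in $H-v$, and the natural induction does not obviously close. So instead I would argue through the identity $m(H)\equiv\det(A_H+I)\pmod 2$ established in \cref{thm:oddmatchings}. Here $m(H-v)\equiv\det$ of the principal minor $(A_H+I)_{vv}$, which is the $(v,v)$ entry of $\operatorname{adj}(A_H+I)$.

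\emph{Case $\det(A_H+I)=1$.} The matrix is invertible. Since $H$ is even, $(A_H+I)\Ones=\Ones$, so $(A_H+I)^{-1}\Ones=\Ones$. I do not yet see how this forces the diagonal entries of $(A_H+I)^{-1}$ to equal $1$, which is what $m(H-v)\equiv 1$ would require; this case is left open.

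\emph{Case $\det(A_H+I)=0$.} The matrix has rank at most $n-1$, so $\operatorname{adj}$ has rank at most $1$. Since it is symmetric with kernel vector $u$, $\operatorname{adj}=uu^T$ over $\FF_2$. Then $u$ lies in the kernel of $A_H+I$, so $u^T\Ones=u^T(A_H+I)\Ones=0$. The diagonal of $\operatorname{adj}$ is $u$ itself, because $u_i^2=u_i$ over $\FF_2$. This gives the sum of the $m(H-v)$ being even, but not each one individually.

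Because neither case settles the per-vertex claim, I would fall back on a direct route that bypasses it entirely. For extremal $G_1$, let $M_1=A_{G_1}+I$, so $M_1^{-1}\Ones=\Ones$. Using that the diagonal of a symmetric matrix over $\FF_2$ lies in its image (Gallai), together with $M^{-1}$ fixing $\Ones$, I would aim to show $\diag(M_1^{-1})=\Ones$, i.e.\ $m(G_1-x)$ is odd, and similarly $m(G_2-y)$ is odd. Substituting odd values for $m(G_1)$, $m(G_2)$, $m(G_1-x)$ and $m(G_2-y)$ into the formula above gives $m(G)\equiv 1+1-1\equiv 1 \pmod 2$, which completes the proof.

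The main obstacle is therefore the claim that $\diag(M^{-1})=\Ones$ for extremal graphs, i.e.\ that $m(G_i-v)$ is odd for every vertex $v$. As stated, I expect this may be false for some vertices. If it is, I would instead proceed purely in linear algebra: the matrix $A_G+I$ is obtained by gluing $M_1$ and $M_2$ at one shared diagonal entry, and its determinant over $\FF_2$ expands as $\det M_1\det M_2'+\det M_1'\det M_2$, where primes denote deleting $x$ or $y$. In this form the determinant has a clean parity exactly when the relevant minors can be controlled, which reduces the lemma to bounding those minors.
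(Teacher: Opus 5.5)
\textbf{There is a genuine gap: the parity of $m(G_1-x)$ and $m(G_2-y)$, on which the whole argument rests, is never proved.}

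Your reduction to matchings is correct, and so is the count $m(G)=m(G_1)m(G_2-y)+m(G_1-x)m(G_2)-m(G_1-x)m(G_2-y)$. But the proof stops exactly where the content of the lemma lies. Write $\alpha=m(G_1-x)$ and $\beta=m(G_2-y)$. Using $m(G_1)\equiv m(G_2)\equiv 1$, your formula becomes $m(G)\equiv\alpha+\beta+\alpha\beta\pmod 2$. This is odd if and only if at least one of $\alpha,\beta$ is odd.

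You leave this open: the pairing attempt and the adjugate case analysis both stall. Your last step then simply assumes $\alpha\equiv\beta\equiv 1$, while saying you expect this may be false. As written, the argument is conditional on an unproved claim.

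The fallback in your last paragraph does not rescue it. The glued diagonal entry of $A_G+I$ is $1$, not $1+1$. So the correct expansion over $\FF_2$ is $\det M_1\det M_2'+\det M_1'\det M_2+\det M_1'\det M_2'$, which is the same as your matching formula. Your two-term version would give $\det(A_G+I)=0$ precisely when all four quantities equal $1$.

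\textbf{How to close the gap.} The claim is in fact true for every vertex of an extremal graph, and your linear-algebra instinct can prove it. Gallai's theorem alone gives nothing for an invertible matrix; what you need is the identity behind it: for symmetric $N$ over $\FF_2$, $w^TNw=\diag(N)^Tw$.

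Take $M=A_{G_1}+I$, $N=M^{-1}$ (symmetric, with $N\Ones=\Ones$) and $w=Mv$. Then for all $w$,
$\diag(N)^Tw=v^TMv=\diag(M)^Tv=\Ones^Tv=\Ones^TNw=\Ones^Tw$.
Hence $\diag(M^{-1})=\Ones$. By the cofactor formula and the identity $\det(A_H+I)\equiv m(H)$ from the proof of \cref{thm:oddmatchings}, applied to $H=G_1-x$, this says $m(G_1-x)$ is odd.

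Alternatively, you can cite \cref{cor:vtxcover}, which the paper proves by the polynomial method independently of this lemma. It gives $m(G_1-x)\equiv m(G_1)$ directly. With that input, your argument is the paper's proof of the converse, \cref{lemma:cutvtx}, run in the other direction.

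\textbf{How the paper does it.} The paper proves the $1$-join lemma without matchings. It takes a hypothetical proper hitting set $S$ of $G$ and splits into cases on whether the glued vertex $v$ lies in $S$ and on the parities of $|N(v)\cap S\cap V(G_i)|$. In each case it either restricts $S$ to a proper hitting set of $G_1$ or $G_2$, or shows that $G[S\cap V(G_1)]$ has exactly one odd-degree vertex, contradicting the handshake lemma.
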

\begin{proof}
    Let $v\in V(G)$ be the vertex corresponding to the identification of $x$ and $y$. Assume for sake of contradiction that $G$ is not extremal, and let $S\subsetneq V(G)$ be a proper hitting set of $G$.  Let $a = |N(v)\cap S\cap V(G_1)|$ and $b = |N(v)\cap S\cap V(G_2)|$. 
    
    Case 1: $v\notin S$. Since $S$ is a hitting set of $G$, we have that $a + b$ is odd. Assume without loss of generality that $a$ is odd and $b$ is even. Then $S\setminus(V(G_2)\setminus\{v\})$ is a proper hitting set of $G_1$, contradicting that $G_1$ is extremal. 
    
    Case 2: $v\in S$. Since $S$ is a hitting set of $G$, we have that $a + b$ is even. First assume both $a$ and $b$ are even. Since $S$ is a proper hitting set of $G$, there exists some $u\in V(G)\setminus S$. Assume without loss of generality that $u\in V(G_1)$. Then $S\setminus(V(G_2)\setminus\{v\})$ is a proper hitting set of $G_1$, contradicting that $G_1$ is extremal. Now assume both $a$ and $b$ are odd. Let $H = G[S\cap V(G_1)]$. Then $\deg_H(v) = a$ is odd and $\deg_H(w)$ is even for any $w\in V(H)\setminus\{v\}$. Therefore, $H$ is a graph with exactly one odd-degree vertex, a contradiction.
\end{proof}

By the lemma above, taking the $1$-sum of two extremal graphs produces an extremal graph with a cut-vertex. Conversely, it is also true that any extremal graph with a cut-vertex $v$ is the $1$-join of two extremal graphs at $v$.

\begin{lemma}[Cut-vertex]\label{lemma:cutvtx}
    Let $G$ be an extremal graph with a cut-vertex $v$. Let $A,B\subseteq V(G)$ such that $A\cup B = V(G)$, $A\cap B = \{v\}$, and there are no edges from $A\setminus B$ to $B\setminus A$. Then $G[A]$ and $G[B]$ are extremal.
\end{lemma}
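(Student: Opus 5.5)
We must show that $G[A]$ and $G[B]$ are even and that each has no proper hitting set. By symmetry it suffices to treat $G[A]$.

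\emph{Evenness.} Every vertex of $A\setminus\{v\}$ has all of its neighbours inside $A$, so it has the same degree in $G[A]$ as in $G$, and that degree is even. The vertex $v$ is then the only vertex whose degree could be odd. Since a graph has an even number of odd-degree vertices, $\deg_{G[A]}(v)$ is even as well. Hence $G[A]$ is even, and likewise $G[B]$.

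\emph{No proper hitting set.} Suppose for contradiction that $S_A\subsetneq A$ is a proper hitting set of $G[A]$. Because $G[B]$ is even, $B$ is a hitting set of $G[B]$. The plan is to glue $S_A$ to $B$ or to $B\setminus\{v\}$ so as to obtain a proper hitting set of $G$, contradicting extremality. Set $b=\deg_{G[B]}(v)$, which is even, and split into two cases.

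Case 1: $v\in S_A$. Take $S=S_A\cup B$. Then:
\begin{itemize}
\item vertices of $A\setminus\{v\}$ see exactly what they saw in $G[A]$, so they are correctly hit;
\item each vertex of $B\setminus\{v\}$ has all its neighbours in $B$, and $B\subseteq S$, so it is correctly hit;
\item at $v$, the count $|N_G[v]\cap S|$ equals the odd count $|N_{G[A]}[v]\cap S_A|$ plus the $b$ neighbours of $v$ in $B\setminus\{v\}$. Since $b$ is even, the total is odd.
\end{itemize}
So $S$ is a hitting set. It is proper because $S_A\neq A$ and $S\cap A=S_A$.

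Case 2: $v\notin S_A$. Take $S=S_A\cup(B\setminus\{v\})$. Vertices of $A\setminus\{v\}$ are fine as before. The vertex $v$ gains $b$ extra neighbours in $S$, which is even, so its parity is unchanged and it remains correctly hit. The difficulty is a vertex $w\in B\setminus\{v\}$: it is hit correctly exactly when $|N_G[w]\cap B|-[w\sim v]$ is odd, i.e.\ when $w$ is not adjacent to $v$. Neighbours of $v$ in $B$ therefore fail, so this naive gluing does not work.

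To repair Case 2, I would use an auxiliary hitting set of $G[B]$ that avoids $v$. By the theorem of Sutner/Gallai quoted in the introduction, applied to $G[B]-v$, there is a set $T\subseteq B\setminus\{v\}$ hitting every vertex of $G[B]-v$. Then take $S=S_A\cup T$.
\begin{itemize}
\item Vertices of $A\setminus\{v\}$ are unchanged.
\item Vertices of $B\setminus\{v\}$ are hit correctly, since $v\notin S$ and $T$ hits $G[B]-v$.
\item At $v$, the parity becomes $|N_{G[A]}(v)\cap S_A|+|N(v)\cap T|$.
\end{itemize}
The first term is odd because $S_A$ hits $v$ in $G[A]$ with $v\notin S_A$. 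So $v$ is hit correctly precisely when $|N(v)\cap T|$ is even.

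This parity at $v$ is the main obstacle. I would control it by a handshake-style count in $G[B]$ with respect to $T$. For each $t\in T$, $|N_{G[B]}(t)\cap T|\equiv|N(t)\cap\{v\}|$, because $t$ has even degree in $G[B]$ and $|N[t]\cap T|$ is odd. Summing over $t\in T$ gives
\[
|N(v)\cap T|\equiv \sum_{t\in T}|N_{G[B]}(t)\cap T|\equiv 0 \pmod 2,
\]
which is what we need. So $S=S_A\cup T$ is a hitting set of $G$. It is proper because $v\notin S$. This contradicts the extremality of $G$, which completes the proof.
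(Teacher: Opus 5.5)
Your evenness argument, Case~1, and the reduction of Case~2 to showing that $|N(v)\cap T|$ is even are all correct. The closing handshake step, however, is false. Since $t\in T$, $v\notin T$, and $|N_{G[B]-v}[t]\cap T|$ is odd, the count $|N_{G[B]}(t)\cap T|$ is \emph{even} for every $t\in T$, with no $[t\sim v]$ term. So the sum $\sum_{t\in T}|N_{G[B]}(t)\cap T|=2|E(G[T])|$ is trivially even and says nothing about $v$. Evenness of $\deg_{G[B]}(t)$ controls $|N_{G[B]}(t)\sm T|$ instead. Carrying that through, using that each vertex of $U=B\sm(T\cup\{v\})$ has an odd number of neighbours in $T$, gives only $|N(v)\cap T|\equiv|U|\pmod 2$, and this can be odd. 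For instance, if $G[B]=K_3$ on $\{v,x,y\}$, the sets hitting $G[B]-v$ are $\{x\}$ and $\{y\}$, and both give $|N(v)\cap T|=1$. Re-choosing $T$ cannot help. Let $b$ be the indicator of $N(v)\cap B$ and $\Ones_T$ that of $T$. Then $b=M_{G[B]-v}(\Ones_{B\sm\{v\}}+\Ones_T)$, so by symmetry $b^Tk=0$ for every $k\in\ker M_{G[B]-v}$. Hence the parity of $|N(v)\cap T|$ is the same for all admissible $T$.

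So Case~2 needs a genuinely new ingredient. Part of it is elementary: if $G[B]-v$ has two hitting sets $T_1\neq T_2$, the invariance above shows that $V(G)\sm(T_1\triangle T_2)$ is a proper hitting set of $G$. The remaining case is when $G[B]-v$ has a unique hitting set $T$. Then $M_{G[B]-v}$ is invertible with $M_{G[B]-v}^{-1}b=\Ones_{B\sm\{v\}}+\Ones_T$. The Schur complement gives $\det M_{G[B]}=1+b^TM_{G[B]-v}^{-1}b=1+|N(v)\cap T|$. Thus what you need is exactly $m(G[B])\equiv m(G[B]-v)$, i.e.\ that the number of matchings of the even graph $G[B]$ covering $v$ is even. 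This is Corollary~\ref{cor:vtxcover}, which the paper proves by the polynomial method. That corollary, together with $m(G)=m(G_2\sm v)\,m(G_1)+m(G_1\sm v)\,m(G_2,v)$ and Theorem~\ref{thm:oddmatchings}, is the paper's proof. The situation you must exclude is an even $G[B]$ with $\det M_{G[B]-v}=1$ but $\det M_{G[B]}=0$. In that situation your gluing fails for every choice of $T$, and a local parity count of the kind you attempted does not rule it out.
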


We deduce Lemma \ref{lemma:cutvtx} from a corollary of the following result on the parity of matchings in even graphs. For a matching $M$, let $V(M)$ be the set of endpoints of the edges of $M$. The size of a matching is the number of edges in it. For a graph $G$ and $A\subseteq V(G)$, we say that a matching $M$ \textit{covers} $A$ if $A\subseteq V(M)$.

\begin{theorem}\label{thm:oddset}
    Let $G$ be an even graph and $S\subseteq V(G)$ with $|S|$ odd. Then for any $k$, the number of size-$k$ matchings covering $S$ is even. 
\end{theorem}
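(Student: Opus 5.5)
The plan is a double-counting (Leibniz-rule) argument that uses evenness exactly once, combined with an induction on $k$ and on $|S|$. Write $N_k(S)$ for the number of size-$k$ matchings covering $S$. Everything is taken mod $2$, where inclusion--exclusion signs disappear.

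\textbf{Step 1: peel off one vertex of $S$.} Fix $s\in S$. A size-$k$ matching $M$ covering $S$ splits uniquely into two parts:
\begin{itemize}
\item a size-$(k-1)$ matching $M'$ covering $S\setminus\{s\}$ and avoiding $s$;
\item an edge $sw$ with $w\notin V(M')$.
\end{itemize}
For a fixed $M'$, the number of admissible $w$ is $\deg(s)-|N(s)\cap V(M')|$. Since $G$ is even, this is $\equiv |N(s)\cap V(M')| \pmod 2$. Hence
\[
N_k(S)\equiv \#\{(M',w)\colon M' \text{ a } (k-1)\text{-matching covering } S\setminus\{s\},\ s\notin V(M'),\ w\in N(s)\cap V(M')\} \pmod 2 .
\]
This is the only place evenness enters.

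\textbf{Step 2: re-route each pair.} For such a pair, let $w'$ be the partner of $w$ in $M'$. Swapping $ww'$ for $sw$ gives $M''=M'-ww'+sw$. This is a $(k-1)$-matching covering $S$ exactly when $w'\notin S$, and then $w'$ is a marked uncovered vertex.

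Conversely, from $M''$ together with the edge $sw\in M''$ and a vertex $w'\notin V(M'')$ adjacent to $w$, the pair $(M',w)$ can be recovered. So the pairs with $w'\notin S$ biject with triples $(M'',w,w')$ where:
\begin{itemize}
\item $M''$ is a $(k-1)$-matching covering $S$;
\item $w$ is the partner of $s$ in $M''$;
\item $w'\in N(w)\setminus V(M'')$.
\end{itemize}
For fixed $M''$, the number of such $w'$ is $\deg(w)-|N(w)\cap V(M'')|\equiv |N(w)\cap V(M'')|$, again by evenness.

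The pairs with $w'\in S\setminus\{s\}$ form a second family. It is indexed by the ways of choosing the edge $ww'$, which is the same as removing a vertex $w'$ from $S$ together with its partner. This is where induction on $|S|$ in steps of $2$ should apply: the base case is $|S|=1$, and the inductive step uses the hypothesis for the odd sets $(S\setminus\{s,w'\})\cup\{w\}$ or similar. A second induction on $k$ should absorb the $(k-1)$-matching terms.

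\textbf{Step 3: close the recursion.} Summing the identity of Step 1 over all $s\in S$ produces the factor $|S|$. Because $|S|$ is odd, the left side is just $N_k(S)$, while the right side becomes a sum of terms of the form "$(k-1)$-matchings covering an odd set, weighted by neighbourhood counts". I would package this cleanly with the polynomial method. In the ring $\FF_2[t,x_v : v\in V]/(x_v^2)$, set
\[
F=\prod_{uv\in E}(1+t\,x_ux_v)=\sum_M t^{|M|}x^{V(M)},
\]
and let $\partial_v$ denote the formal derivative in $x_v$. The identity of Step 1 becomes a relation of the form $\partial_s F = t\bigl(\sum_{w\in N(s)}x_w\bigr)F$ restricted to monomials avoiding $s$. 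Evenness says that the linear form $\sum_{w\in N(s)}x_w$ has an even number of terms, i.e.\ it vanishes at the all-ones point. The quantity $N_k(S)$ is obtained by applying $\prod_{s\in S}\partial_s$ to $F$, taking the coefficient of $t^k$, and evaluating at $x_v=1$ for $v\notin S$. One then checks that an odd number of such derivative applications forces a factor that is $0$ at that evaluation point.

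\textbf{Main obstacle.} The hard part is the bookkeeping in Steps 2--3. Showing that the leftover terms with $w'\in S$ cancel in pairs, or equivalently that the evaluation in the polynomial formulation really kills every term when $|S|$ is odd, is delicate. The difficulty is that the substitution $x_v=1$ is incompatible with the relation $x_v^2=0$. I would first try the reversible edge-swap involution of Step 2 directly on triples $(M,s,w)$, looking for a fixed-point-free pairing. Only if that fails would I switch to the generating-function computation.
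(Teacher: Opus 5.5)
There is a genuine gap. Your Steps 1 and 2 are correct, but the proof stops exactly where the work lies. The claim that the leftover terms vanish mod $2$ is only asserted (``should apply'', ``or similar'', ``one then checks''), and your last paragraph concedes it is open. The tools you name would not do it as stated:
\begin{itemize}
\item The first family is a \emph{weighted} count of $(k-1)$-matchings $M''$ covering $S$, with a weight $|N(w)\cap V(M'')|$ that varies with $M''$. An induction hypothesis on $k$ controls only the unweighted number $N_{k-1}(S)$, so it says nothing about this sum.
\item The second family consists of matchings that miss $w'\in S$, i.e.\ that cover the \emph{even} set $S\setminus\{w'\}$. An induction over odd sets does not apply to them.
\item In the polynomial sketch, setting $x_v=1$ is a linear functional on $\mathbb{F}_2[t,x_v]/(x_v^2)$ but not a ring map, as you noticed. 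So the vanishing of $\sum_{w\in N(s)}x_w$ at the all-ones point cannot be pushed through products with $F$.
\end{itemize}

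The paper supplies two missing ingredients.
\begin{itemize}
\item It uses the \emph{total} derivative $\partial=\sum_{v\in V}\partial_v$ over all vertices, not just those in $S$. Evenness then gives $\partial P=tP\sum_v\deg(v)x_v=0$ identically, rather than only at an evaluation point.
\item It replaces your evaluation by multiplying with $Q=\prod_{w\in T}(1+x_w)$, where $T=V\setminus S$, and extracting the top coefficient $[t^kx^V]$. This computes $m_k(G,S)$ and is compatible with $x_v^2=0$.
\end{itemize}
Since $\partial$ lowers degree, $0=[t^kx^V]\partial(PQ)=[t^kx^V]P\,\partial Q$. Then $\partial Q=Q(|T|+\sum_{w\in T}x_w)$ turns this into $(|T|+|V|)\,m_k=|S|\,m_k=0$.

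Alternatively, your own framework can be closed without any induction:
\begin{enumerate}
\item Sum Step 1 over $s\in S$. The triples $(s,M',w)$ with $w'\in S$ cancel under the fixed-point-free involution $(s,M',w)\mapsto(w',M'-ww'+sw,w)$.
\item Your Step 2 map sends the remaining triples bijectively to triples $(M'',v,w')$. Here $M''$ is a $(k-1)$-matching covering $S$, $v\in V(M'')$ has its $M''$-partner in $S$, and $w'\in N(v)\setminus V(M'')$.
\item For fixed $M''$, if you allow every $v\in V(M'')$, the number of such pairs $(v,w')$ is the number of edges between $V(M'')$ and its complement. This is even because $G$ is even.
\item The triples in which the partner $u$ of $v$ lies outside $S$ cancel under $(M'',v,w')\mapsto(M''-uv+vw',v,u)$.
\end{enumerate}
Hence $|S|\,N_k(S)$ is even, and since $|S|$ is odd, so is $N_k(S)$.
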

\begin{proof}
    Let $G = (V, E)$, and work in the ring $R = \mathbb F_2[t][x_v:v\in V]/\{x_v^2:v\in V\}$. Define the polynomial $P = \prod_{ab\in E}(1+tx_ax_b)\in R$. For $A\subseteq V$, write $x^A = \prod_{a\in A}x_a$. By the definition of $R$, we have
    \[
        P = \sum_{M\text{ matching}}t^{|M|}x^{V(M)} = \sum_{i\ge 0}\sum_{\substack{U\subseteq V\\|U|=2i}} c_Ut^ix^U,
    \]
    where $c_U$ is the number of perfect matchings of $G[U]$ modulo $2$. Let $T = V\setminus S$ and define a second polynomial $Q = \prod_{w\in T}(1+x_w)\in R$. Consider the coefficient $[t^kx^V](P Q)$. Any $U\subseteq V$ with $|U| \neq 2k$ cannot contribute to this coefficient as its monomial misses $t^k$. Any $U\subseteq V$ with $U\not\supseteq S$ cannot contribute to this coefficient as its monomial misses $x_s$ for some $s\in S$. Any $U\subseteq V$ with $|U|=2k$ and $U\supseteq S$ contributes $c_U$ to this coefficient exactly once: choose ``$1$'' for $w\in T\cap U$ and ``$x_w$'' for $w\in T\setminus U$ when multiplying by $Q$. Therefore,
    \[
        [t^kx^V](P Q) = \sum_{\substack{U\subseteq V\\|U|=2k\\U\supseteq S}} c_U = m_k(G, S),
    \]
    where $m_k(G, S)$ is the number of size-$k$ matchings covering $S$ modulo $2$. For each $v\in V$, define the differential operator $\partial_v$ with $\partial_v(x_v) = 1$, $\partial_v(x_u) = 0$ for $u\neq v$, and $\partial_v(t) = 0$. Define $\partial = \sum_{v\in V}\partial_v$. For each $v\in V$, using the fact that $(1+tx_u x_v)^2 = 1$, the product rule gives
    \[
        \partial_v P = \partial_v\prod_{ab\in E}(1+tx_ax_b) = \sum_{u\in N(v)}tx_uP(1+tx_ux_v) = tP\sum_{u\in N(v)}x_u.
    \]
    Summing over all $v\in V$ and using $\deg(v) = 0$ modulo $2$ as $G$ is even,
    \[
        \partial P = tP\sum_{v\in V}\sum_{u\in N(v)}x_u = tP\sum_{v\in V}\deg(v)x_v = 0.
    \]
    Now consider the coefficient $[t^kx^V]\partial(P Q)$. Since $\partial$ reduces the degrees of all monomials by $1$, it must be that $[t^kx^V]\partial(P Q) = 0$. On the other hand, expanding $\partial(P Q)$ with the product rule and using $\partial P=0$,
    \[
        \partial(P Q) = (\partial P) Q + P(\partial Q) = P(\partial Q).
    \]
    Since $Q = \prod_{w\in T}(1+x_w)$, we have $\partial Q  = Q\sum_{w\in T}(1+x_w) = Q(|T| + \sum_{w\in T}x_w)$. Hence $\partial  (P Q) = P Q(|T| + \sum_{w\in T}x_w)$. Gathering coefficients, this gives
    \[
        [t^kx^V]\partial(P Q) = |T|m_k(G, S) + \sum_{w\in T}[t^kx^{V\setminus\{w\}}](P Q).
    \]
    Now fix $w\in T$ and consider the coefficient $[t^kx^{V\setminus\{w\}}](PQ)$. The same reasoning for determining $[t^k x^V](PQ)$ yields that any $U\subseteq V$ with $|U| \neq 2k$ or $U\not\supseteq S$ cannot contribute to this coefficient. Furthermore, any $U$ with $w\in U$ also cannot contribute, since it cannot have $x_w$ in its monomial. Finally, any $U\subseteq V$ with $|U|=2k$, $U\supseteq S$, and $U\not\ni w$ contributes $c_U$ to this coefficient exactly once: choose ``$1$'' for the vertices in $T\cap U$ and $w$, and choose $x_a$ for $a\in T\setminus U$ when multiplying by $Q$. Therefore, 
    \[
        [t^kx^{V\setminus\{w\}}](P Q) = \sum_{\substack{U\subseteq V\\|U|=2k\\U\supseteq S\\U\not\ni w}} c_U
    \]
    is the number of size-$k$ matchings covering $S$ but not covering $w$. Summing over all $w\in T$ and interpreting the scalars $|S|$, $|T|$, $|U|$, and $|V|$ modulo $2$,
        \begin{align*}
        \sum_{w\in T}[t^kx^{V\setminus\{w\}}](P Q) &= \sum_{w\in T}\sum_{\substack{U\subseteq V\\|U|=2k\\U\supseteq S\\U\not\ni w}} c_U = \sum_{w\in T}\sum_{\substack{U\subseteq V\\|U|=2k\\U\supseteq S}} \mathbbm{1}[w\notin U]c_U\\
        &= \sum_{\substack{U\subseteq V\\|U|=2k\\U\supseteq S}}c_U|T\setminus U| = \sum_{\substack{U\subseteq V\\|U|=2k\\U\supseteq S}}c_U(|V|-|U|)\\
        &= \sum_{\substack{U\subseteq V\\|U|=2k\\U\supseteq S}}c_U(|V|-2k) = |V|\sum_{\substack{U\subseteq V\\|U|=2k\\U\supseteq S}}c_U = |V|m_k(G, S).
    \end{align*}
    It follows that $0 = [t^kx^V]\partial(P Q) = |T|m_k(G, S) + |V|m_k(G, S) = (|V|-|T|)m_k(G,S) = |S|m_k(G,S)$. Since $|S|$ is odd, this means $m_k(G,S) = 0$, so the number of size-$k$ matchings covering $S$ is even.
\end{proof}

The following corollary is an immediate consequence of Theorem \ref{thm:oddset} with $|S|=1$ over all $k\ge 0$.

\begin{corollary}\label{cor:vtxcover}
    For any even graph $G$ and any $v\in V(G)$, the number of matchings covering $v$ is even.
\end{corollary}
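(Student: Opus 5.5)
The corollary follows from \cref{thm:oddset} with $S=\{v\}$, summed over all matching sizes. Fix an even graph $G$ and a vertex $v\in V(G)$, and set $S=\{v\}$, so that $|S|=1$ is odd. A matching covers $v$ exactly when it covers $S$. So the set of matchings covering $v$ is the disjoint union, over $k\ge 0$, of the sets of size-$k$ matchings covering $S$.

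By \cref{thm:oddset}, each of these sets has even cardinality. Only finitely many $k$ contribute, namely $0\le k\le |V(G)|/2$; for $k=0$ the set is empty, since the empty matching does not cover $v$. A finite sum of even numbers is even, so the total number of matchings covering $v$ is even.

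The proof has no real obstacle. The only point to check is that "covering $S$" in \cref{thm:oddset} agrees with "covering $v$" when $S=\{v\}$, which holds by the definition $S\subseteq V(M)$. It is also worth recording the consequence used afterwards: the number of matchings of $G$ avoiding $v$ equals $m(G-v)$. Hence $m(G)\equiv m(G-v)\pmod 2$ for every even graph $G$. This is presumably how the corollary feeds, via \cref{thm:oddmatchings}, into the proof of \cref{lemma:cutvtx}.
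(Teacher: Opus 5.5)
Your proposal is correct and is the paper's own argument: the paper obtains the corollary as an immediate consequence of \cref{thm:oddset} with $S=\{v\}$, summing over all sizes $k\ge 0$. Your consequence $m(G)\equiv m(G-v)\pmod 2$ is used in exactly that form in the even completion lemma. The proof of \cref{lemma:cutvtx} instead uses the corollary directly, through the evenness of $m(G_2,v)$.
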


We now use Corollary \ref{cor:vtxcover} to deduce Lemma \ref{lemma:cutvtx}. For a graph $G$ and a vertex $v\in V(G)$, let $m(G, v)$ denote the number of matchings in $G$ covering $v$.

\begin{proof}[Proof of Lemma \ref{lemma:cutvtx}.]
    Let $G_1 = G[A]$ and $G_2 = G[B]$. Suppose for contradiction that, without loss of generality, $G_1$ is not extremal. Since $G$ is an even graph, it must be that $G_1$ and $G_2$ are even graphs. Indeed, for $i\in [2]$ and $u \in V(G_i)\setminus\{v\}$, we have $\deg_{G_i}(u) = \deg_G(u)$ is even. This further implies that $\deg_{G_i}(v)$ is even, as otherwise $G_i$ would have exactly one vertex of odd degree, a contradiction.  
    
    As $G$ is extremal, by Theorem \ref{thm:oddmatchings} we have $m(G)$ is odd. Any matching in $G$ either covers $v$ via an edge in $G_1$, covers $v$ via an edge in $G_2$, or does not cover $v$. Since there are no edges from $A\setminus B$ to $B\setminus A$,
    \begin{align*}
        m(G) &= m(G_1\setminus v)\cdot m(G_2\setminus v) + m(G_1, v)\cdot m(G_2\setminus v) + m(G_1\setminus v)\cdot m(G_2, v)
        \\&= m(G_2\setminus v)[m(G_1\setminus v) + m(G_1, v)] + m(G_1\setminus v)\cdot m(G_2,v)
        \\&= m(G_2\setminus v)\cdot m(G_1) + m(G_1\setminus v)\cdot m(G_2, v).
    \end{align*}
    Since $G_1$ is not extremal, $m(G_1)$ is even, so $m(G)$ has the same parity as $m(G_1\setminus v)\cdot m(G_2, v)$. However, by Corollary \ref{cor:vtxcover} we have $m(G_2, v)$ is even, so $m(G)$ must also be even, a contradiction.
\end{proof}

Extremal graphs are not closed under subdivisions, as exemplified by the family of cycles. However, we do have the following.

\begin{lemma}[Triple subdivision]\label{triple subdivision}
    A graph $G$ is extremal if and only if for any edge $e\in E(G)$, the graph $G'$ obtained by subdividing $e$ three times is extremal.    
\end{lemma}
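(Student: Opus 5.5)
Fix an edge $e=xy$ of $G$ and let $G'$ be obtained by replacing $e$ with the path $x,a,b,c,y$, where $a,b,c$ are new vertices of degree $2$. The plan is to apply \cref{thm:oddmatchings}. This means checking separately that evenness is preserved in both directions and that the parity of the number of matchings is preserved.

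Evenness is immediate. In $G'$ the vertices $x$ and $y$ keep their degrees, since the edge $xy$ is traded for $xa$ and $cy$, and every other old vertex is unaffected. The new vertices $a,b,c$ all have degree $2$. Hence $G'$ is even if and only if $G$ is even. (Since the condition on $G$ does not depend on $e$, the statement should be read for a fixed arbitrary edge; proving $G$ extremal $\iff$ $G'$ extremal for each $e$ gives both readings.)

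For the matching count, let $H=G\setminus e$. I would express $m(G)$ and $m(G')$ through $H$. Let $m_{\emptyset}$ denote the number of matchings of $H$ covering neither $x$ nor $y$; define $m_x$ (covering $x$ but not $y$), $m_y$, and $m_{xy}$ similarly. Then
\[
m(G)=m(H)+m_{\emptyset}.
\]
For $G'$, classify matchings of $H$ by which of $x,y$ they cover, and count the matchings of the path $x,a,b,c,y$ that avoid the covered endpoints.
\begin{itemize}
\item If both $x$ and $y$ are covered, only the edges $ab$ and $bc$ remain available, giving $3$ options.
\item If only $x$ is covered, the path $a,b,c,y$ has $5$ matchings. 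The case where only $y$ is covered is symmetric.
\item If neither is covered, the full path $P_5$ has $8$ matchings.
\end{itemize}
Reducing modulo $2$,
\[
m(G')\equiv m_{xy}+m_x+m_y \equiv m(H)+m_{\emptyset}=m(G) \pmod 2.
\]
So $m(G')$ and $m(G)$ have the same parity.

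Combining the two facts with \cref{thm:oddmatchings}: $G$ is extremal if and only if $G$ is even with $m(G)$ odd, which holds if and only if $G'$ is even with $m(G')$ odd, which holds if and only if $G'$ is extremal.

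The only point needing care is the matching counts on the short paths. The paths $a,b,c$, $a,b,c,y$ and $x,a,b,c,y$ have $3$, $5$ and $8$ matchings respectively (Fibonacci numbers). Their residues $1,1,0$ modulo $2$ are exactly what makes the identity work; I expect no real obstacle beyond verifying these small counts.
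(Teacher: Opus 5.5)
Your proof is correct and follows essentially the same route as the paper: both prove $m(G')\equiv m(G) \pmod 2$ and then apply \cref{thm:oddmatchings}. The paper conditions on which path edges a matching of $G'$ uses and cancels equal counts in pairs ($K_{xa}=K_{xa,bc}$, $K_{cy}=K_{ab,cy}$, $K_{ab}=K_{bc}$), while you condition on which of $x,y$ the part in $G\setminus e$ covers and use the path counts $3,5,5,8$; your explicit check that evenness is preserved fills in a step the paper leaves implicit.
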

\begin{proof} In the following, all sums will be taken modulo $2$.
    Let $e=xy\in E(G)$ and let $G'$ be the graph obtained by subdividing $e$ three times, introducing the path $x\dd a\dd b\dd c\dd y$. Let us relate the total number of matchings in $G'$ with the one in $G$. 
    For $I\subseteq\{xa,ab,bc,cy\}$, we define $K_I$ as the number of matching $M$ on $G'$ such that $M\cap \{xa,ab,bc,cy\} = I$. We have that
    \begin{align*}
        m(G') &= K_\varnothing + K_{xa}+ K_{xa,bc} + K_{cy}+ K_{ab,cy} + K_{ab} +  K_{bc} + K_{xa,cy}\\
        &= K_\varnothing + K_{xa}+ K_{xa} + K_{cy}+ K_{cy} + K_{ab} +  K_{ab} + K_{xa,cy}\\
        &= K_\varnothing + K_{xa,cy}.
    \end{align*} 
    Notice that the $K_{xa,cy}$ is the number of matches on $G$ that includes $e$. Similarly, $K_\varnothing$ is equivalent to the number of matchings of $G$ not including $e$. Therefore, \begin{align*}
        m(G') &=K_\varnothing + K_{xa,cy}\\
        &= m(G-e) + m(G,e)\\
        &=  m(G).
    \end{align*}
    \cref{thm:oddmatchings} lets us conclude.
\end{proof}

Let $G_1$ and $G_2$ graphs. Let $X_1\subseteq V(G_1)$ and $X_2\subseteq V(G_2)$. We define the \textit{completion of $G_1,G_2$ along $X_1$,$X_2$} as the graph obtained by taking the disjoint union of $G_1$ and $G_2$ and adding all possible edges between $X_1$ and $X_2$. Formally, it is the graph $(V(G_1)\cup V(G_2), E(G_1)\cup E(G_2) \cup \{xy : x\in X_1, y\in X_2 \})$. See \cref{fig: even completion} for an illustration.
\begin{figure}[h]
    \centering
    \scalebox{1.2}{\begin{tikzpicture}
	\begin{pgfonlayer}{nodelayer}
		\node [style=none] (0) at (-4.25, -0.5) {};
		\node [style=none] (1) at (-3.5, 1.25) {};
		\node [style=none] (2) at (-0.5, -1) {};
		\node [style=none] (3) at (-3.75, -3) {};
		\node [style=none] (4) at (-1.75, -3.5) {};
		\node [style=none] (5) at (-1.75, -2.5) {};
		\node [style=none] (6) at (-1.75, -0.75) {};
		\node [style=none] (8) at (-0.75, 0.75) {};
		\node [style=none] (9) at (5, 1.25) {};
		\node [style=none] (10) at (7, 0.25) {};
		\node [style=none] (11) at (6.25, -3) {};
		\node [style=none] (12) at (4.75, -3.75) {};
		\node [style=none] (13) at (3.5, -2.25) {};
		\node [style=none] (14) at (3.5, -0.75) {};
		\node [style=none] (17) at (4.75, -2.5) {};
		\node [style=none] (19) at (4.75, -0.75) {};
		\node [style=none] (20) at (-2.5, 0.7) {$G_1$};
		\node [style=none] (21) at (-1.74, -1.6) {$X_1$};
		\node [style=none] (22) at (5, 0.7) {$G_2$};
		\node [style=none] (23) at (4.75, -1.6) {$X_2$};
	\end{pgfonlayer}
	\begin{pgfonlayer}{edgelayer}
		\draw [in=90, out=60, looseness=0.75] (1.center) to (8.center);
		\draw (8.center) to (2.center);
		\draw [in=45, out=-75, looseness=0.75] (2.center) to (4.center);
		\draw [in=330, out=-135, looseness=0.75] (4.center) to (3.center);
		\draw [in=-105, out=150] (3.center) to (0.center);
		\draw [in=240, out=75, looseness=0.50] (0.center) to (1.center);
		\draw [in=180, out=-180] (6.center) to (5.center);
		\draw [in=90, out=0, looseness=0.75] (9.center) to (10.center);
		\draw [in=60, out=-75, looseness=0.50] (10.center) to (11.center);
		\draw [in=-15, out=-120, looseness=0.75] (11.center) to (12.center);
		\draw [in=-90, out=165, looseness=0.75] (12.center) to (13.center);
		\draw [in=-105, out=90, looseness=0.75] (13.center) to (14.center);
		\draw [in=-180, out=75] (14.center) to (9.center);
		\draw [in=180, out=-180] (19.center) to (17.center);
		\draw [in=0, out=0] (6.center) to (5.center);
		\draw [in=0, out=0] (19.center) to (17.center);
		\draw (6.center) to (19.center);
		\draw (19.center) to (5.center);
		\draw (6.center) to (17.center);
		\draw (17.center) to (5.center);
	\end{pgfonlayer}
\end{tikzpicture}}
    \caption{Example of the 'even completion' operation.}
    \label{fig: even completion}
\end{figure}
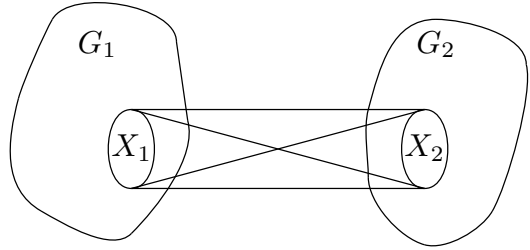

\begin{lemma}[Even completion]
    Let $G_1$, $G_2$ be graphs. Let $X_1\subseteq V(G_1)$ and $X_2\subseteq V(G_2)$ be sets of even size. Let $G$ be the completion of $G_1,G_2$ along $X_1$,$X_2$.
    Then $G$ is extremal if and only if $G_1$ and $G_2$ are extremal.
\end{lemma}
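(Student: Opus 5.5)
Via \cref{thm:oddmatchings}, it suffices to show two things: $G$ is even if and only if $G_1$ and $G_2$ are even, and $m(G)\equiv m(G_1)\,m(G_2)\pmod 2$. Then $m(G)$ is odd exactly when both $m(G_1)$ and $m(G_2)$ are odd, which gives the equivalence.

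\textbf{Evenness.} A vertex $x\in X_1$ gains exactly $|X_2|$ new neighbours, and a vertex of $X_2$ gains $|X_1|$. Both numbers are even, and all other degrees are unchanged. So every vertex has the same degree parity in $G$ as in $G_i$, and $G$ is even iff both $G_1$ and $G_2$ are even.

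\textbf{Matching parity.} Classify a matching $M$ of $G$ by its set $F=M\cap (X_1\times X_2)$ of cross edges. The term $F=\varnothing$ contributes exactly $m(G_1)m(G_2)$. For $F\neq\varnothing$, I will build a fixed-point-free involution on the matchings with nonempty cross part, so that they contribute $0$ modulo $2$.

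Fix a linear order on $X_1$. Given $M$ with $F\ne\varnothing$, let $x\in X_1$ be the smallest vertex covered by a cross edge $xy\in F$. Pick a canonical partner $x'\in X_1\setminus\{x\}$, using that $|X_1|$ is even: pair up $X_1$ in a fixed perfect pairing $\{x,x'\}$. Now split into cases.
\begin{itemize}
\item If $x'$ is unmatched in $M$, swap $xy$ for $x'y$.
\item If $x'$ is matched by a cross edge $x'y'$, swap the pair $\{xy, x'y'\}$ for $\{xy', x'y\}$.
\item If $x'$ is matched inside $G_1$, this simple swap breaks.
\end{itemize}
The main obstacle is the last case, where $x'$ is covered by an edge of $G_1$. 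A naive swap fails there, and the choice of ``smallest'' $x$ is not preserved under the swaps.

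To handle this robustly, I would instead do the counting algebraically. Write the cross contribution as
\[
\sum_{j\ge1} j!\,\sum_{|A|=|B|=j} c_1(A)\,c_2(B),
\]
where $A\subseteq X_1$ and $B\subseteq X_2$ are the vertices used by the cross edges, $c_1(A)$ counts matchings of $G_1-A$, and $c_2(B)$ counts matchings of $G_2-B$. The factor $j!$ counts the complete bipartite matchings between $A$ and $B$, and it is even for $j\ge2$. So modulo $2$ only $j=1$ survives, giving
\[
\Bigl(\sum_{x\in X_1} m(G_1-x)\Bigr)\Bigl(\sum_{y\in X_2} m(G_2-y)\Bigr).
\]
Now $m(G_1-x)=m(G_1)-m(G_1,x)$. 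If $G_1$ is even, \cref{cor:vtxcover} gives $m(G_1,x)$ even, so $\sum_{x\in X_1} m(G_1-x)\equiv |X_1|\,m(G_1)\equiv 0$ since $|X_1|$ is even. In fact the parity claim $\sum_{x\in X_1} m(G_1-x)\equiv\sum_{x\in X_1} m(G_1,x)$ holds regardless, because $|X_1|\,m(G_1)$ is even; but the vanishing needs $G_1$ even.

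So when both graphs are even, $m(G)\equiv m(G_1)m(G_2)$, and combined with the evenness step this proves the lemma. The delicate point to check is the $j!$ reduction, which replaces the involution argument entirely.
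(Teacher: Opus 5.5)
Your final argument is correct and is essentially the paper's proof: the paper also groups the matchings of $G$ by their cross part, discards the terms with at least two cross edges because of the factor $k!$, and uses \cref{cor:vtxcover} to replace $m(G_i\setminus u)$ by $m(G_i)$, so that the single-cross-edge terms contribute $|X_1||X_2|\,m(G_1)m(G_2)\equiv 0$. You can drop the abandoned involution attempt, and your explicit remark that the corollary applies only once $G_1,G_2$ are known to be even (which holds in both directions of the equivalence) makes precise a point the paper leaves implicit.
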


\begin{proof}
    Work modulo $2$ throughout. Clearly $G$ is even if and only if $G_1$ and $G_2$ are even. Let $H$ be the bipartite subgraph of $G$ with $V(H) = X_1\cup X_2$ and $E(H) = E(X_1,X_2)$ (the edges added in the completion operation). 
    Let $\mathcal M$ be the set of matchings of $H$. Since there are no edges from $V(G_1)$ to $V(G_2)$ except those in $E(H)$, 
    \[
        m(G) = \sum_{M\in \mathcal M} m(G_1\setminus V(M)) \cdot m(G_2\setminus V(M)).
    \]
    We will group the terms of this summation into three: the empty matching, matchings of size exactly one, and matchings of size at least two. The latter two will be shown to contribute zero to the sum. We have:
    \[
        m(G) = m(G_1)\cdot m(G_2) + \sum_{u\in X_1, v\in X_2} m(G_1\sm u) \cdot m(G_2\sm v) + \sum_{\substack{M\in \mathcal M\\ |M| \geq 2}} m(G_1\sm V(M)) \cdot m(G_2\sm V(M)).
    \]
    By \cref{cor:vtxcover}, for any $u\in X_1$, we have $m(G_1\setminus u) = m(G_1) - m(G_1,u) = m(G_1)$. Similarly, for any $v\in X_2$, we have $m(G_2\setminus v) = m(G_2)$. Therefore,
    \[
        m(G) = m(G_1)\cdot m(G_2)  + |E(X_1,X_2)| \cdot m(G_1)\cdot m(G_2) + \sum_{\substack{M\in \mathcal M\\ |M| \geq 2}} m(G_1\sm V(M)) \cdot m(G_2\sm V(M)).
    \]
    Since $|X_1|$ and $|X_2|$ are even, $|E(X_1,X_2)| = 0$, so this simplifies to
    \[
        m(G) = m(G_1)\cdot m(G_2)  + \sum_{\substack{M\in \mathcal M\\ |M| \geq 2}} m(G_1\sm V(M)) \cdot m(G_2\sm V(M)). 
    \]
    Consider some arbitrary $2\le k\le \min(|X_1|,|X_2|)$. Let $V_i\subseteq X_i$ be sets of size $k$ for $i\in[2]$. Then the number of matchings $M$ in $\mathcal M$ with $V(M) = V_1\cup V_2$ is exactly $k!$, which is $0$ modulo $2$. Grouping matchings by $|M|$,
    \[
        \sum_{\substack{M\in \mathcal M\\ |M| \geq 2}} m(G_1\sm V(M)) \cdot m(G_2\sm V(M)) =  \sum_{k=2}^{\min(|X_1|,|X_2|)} \sum_{\substack{V_1 \subseteq X_1,\\ V_2 \subseteq X_2,\\ |V_1|=|V_2|=k}} k! \cdot m(G_1\sm V_1) \cdot m(G_2\sm V_2) = 0.
    \]
    Therefore, $m(G) = m(G_1)\cdot m(G_2)$, so over $\mathbb F_2$ we have $m(G) = 1$ if and only if $m(G_1) = 1$ and $m(G_2)=1$. By \cref{thm:oddmatchings}, it follows that $G$ is extremal if and only if $G_1$ and $G_2$ are extremal. 
\end{proof}

We now define a \textit{sun cycle} operation on a graph $G$. Let $\{v_1,\ldots,v_k\} \subseteq V(G)$ (with $v_{k+1} = v_1$ for convenience). We construct a new graph $G'$ from $G$ as follows. For every $i \in [k]$,
\begin{itemize}
    \item If $(v_i,v_{i+1}) \in E(G)$, we remove $(v_i,v_{i+1})$ from $G'$, and if $(v_i,v_{i+1}) \not\in E(G)$, we add $(v_i,v_{i+1})$ to $G'$.
    \item Add an auxiliary vertex $w_i$ with edges just to $v_i$ and $v_{i+1}$.
\end{itemize}
See \cref{fig: sun} for an illustration.
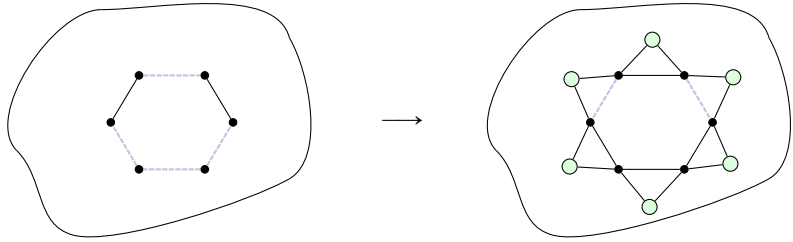
\begin{figure}[h]
    \centering
    \scalebox{1}{\begin{tikzpicture}
	\begin{pgfonlayer}{nodelayer}
		\node [style=none] (0) at (-7.25, -0.75) {};
		\node [style=none] (1) at (-5, 3) {};
		\node [style=none] (2) at (0, 2.25) {};
		\node [style=none] (3) at (0, -1.5) {};
		\node [style=none] (4) at (-5.75, -3) {};
		\node [style=vertex] (5) at (-4.75, 0) {};
		\node [style=vertex] (6) at (-4, 1.25) {};
		\node [style=vertex] (9) at (-4, -1.25) {};
		\node [style=vertex] (10) at (-1.5, 0) {};
		\node [style=vertex] (11) at (-2.25, 1.25) {};
		\node [style=vertex] (12) at (-2.25, -1.25) {};
		\node [style=none] (13) at (3, 0) {$\longrightarrow$};
		\node [style=none] (14) at (5.5, -0.75) {};
		\node [style=none] (15) at (7.75, 3) {};
		\node [style=none] (16) at (12.75, 2.25) {};
		\node [style=none] (17) at (12.75, -1.5) {};
		\node [style=none] (18) at (7, -3) {};
		\node [style=vertex] (19) at (8, 0) {};
		\node [style=vertex] (20) at (8.75, 1.25) {};
		\node [style=vertex] (21) at (8.75, -1.25) {};
		\node [style=vertex] (22) at (11.25, 0) {};
		\node [style=vertex] (23) at (10.5, 1.25) {};
		\node [style=vertex] (24) at (10.5, -1.25) {};
		\node [style=Z dot] (26) at (7.5, 1.15) {};
		\node [style=Z dot] (27) at (9.65, 2.2) {};
		\node [style=Z dot] (28) at (11.8, 1.2) {};
		\node [style=Z dot] (29) at (11.725, -1.1) {};
		\node [style=Z dot] (30) at (9.575, -2.25) {};
		\node [style=Z dot] (31) at (7.45, -1.175) {};
	\end{pgfonlayer}
	\begin{pgfonlayer}{edgelayer}
		\draw [in=180, out=135, looseness=0.75] (0.center) to (1.center);
		\draw [in=105, out=0, looseness=0.75] (1.center) to (2.center);
		\draw [in=30, out=-60, looseness=0.75] (2.center) to (3.center);
		\draw [in=-15, out=-150, looseness=0.50] (3.center) to (4.center);
		\draw [in=315, out=165] (4.center) to (0.center);
		\draw (6) to (5);
		\draw [style=box edge] (9) to (5);
		\draw (11) to (10);
		\draw [style=box edge] (12) to (10);
		\draw [style=box edge] (6) to (11);
		\draw [style=box edge] (12) to (9);
		\draw [in=180, out=135, looseness=0.75] (14.center) to (15.center);
		\draw [in=105, out=0, looseness=0.75] (15.center) to (16.center);
		\draw [in=30, out=-60, looseness=0.75] (16.center) to (17.center);
		\draw [in=-15, out=-150, looseness=0.50] (17.center) to (18.center);
		\draw [in=315, out=165] (18.center) to (14.center);
		\draw [style=box edge] (20) to (19);
		\draw [style=simple] (21) to (19);
		\draw [style=box edge] (23) to (22);
		\draw [style=simple] (24) to (22);
		\draw [style=simple] (20) to (23);
		\draw [style=simple] (24) to (21);
		\draw [style=simple] (26) to (20);
		\draw [style=simple] (26) to (19);
		\draw [style=simple] (31) to (19);
		\draw [style=simple] (31) to (21);
		\draw [style=simple] (30) to (21);
		\draw [style=simple] (30) to (24);
		\draw [style=simple] (29) to (24);
		\draw [style=simple] (29) to (22);
		\draw [style=simple] (28) to (22);
		\draw [style=simple] (28) to (23);
		\draw [style=simple] (23) to (27);
		\draw [style=simple] (20) to (27);
	\end{pgfonlayer}
\end{tikzpicture}}
    \caption{Example of the 'sun cycle' operation.} 
    \label{fig: sun}
\end{figure}

\begin{lemma}[Sun cycle]\label{lemma:sun}
    Let $G$ be a graph and let $S = \{v_1,\ldots,v_k\} \subseteq V(G)$. Let $G'$ be formed from $G$ by applying the sun cycle operation using $S$. Then $G$ is extremal if and only if $G'$ is extremal.
\end{lemma}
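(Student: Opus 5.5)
The plan is to reduce everything to \cref{thm:oddmatchings}, or equivalently to the observation from \cref{sec-char} that $G$ is extremal if and only if $M_G\Ones = \Ones$ and $\det M_G = 1$ over $\FF_2$. I will show two things: $G$ is even if and only if $G'$ is even, and $\det M_{G'} = \det M_G$. Both follow from one block computation. Order the vertices of $G'$ as $V(G)$ followed by $w_1,\dots,w_k$. Let $b_i = e_{v_i}+e_{v_{i+1}} \in \FF_2^{V(G)}$ and let $B$ be the $V(G)\times k$ matrix with columns $b_1,\dots,b_k$. Then
\[
M_{G'} = \left(\begin{array}{c|c} N & B \\\hline B^T & I_k\end{array}\right),
\]
where $N$ is $M_G$ with the entries at positions $(v_i,v_{i+1})$ and $(v_{i+1},v_i)$ flipped for each $i$. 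The $I_k$ block holds because each $w_i$ has a $1$ on its diagonal and no neighbours among the other auxiliary vertices.

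The key step is a Schur complement with respect to the invertible block $I_k$. Over $\FF_2$ this gives $\det M_{G'} = \det(N + BB^T)$. Next I compute $BB^T = \sum_i b_ib_i^T$. Each term $b_ib_i^T$ has $1$s at $(v_i,v_i)$, $(v_{i+1},v_{i+1})$, $(v_i,v_{i+1})$ and $(v_{i+1},v_i)$. Each $v_j$ lies in exactly two of the $b_i$, namely $b_{j-1}$ and $b_j$, so the diagonal contributions cancel mod $2$. The off-diagonal contributions are exactly the flips along the cyclic pairs $v_iv_{i+1}$. Hence $N + BB^T = M_G$ and $\det M_{G'} = \det M_G$.

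For evenness, the degree of each $w_i$ is $2$. For $v_j$, the toggling of the pairs $v_{j-1}v_j$ and $v_jv_{j+1}$ changes its degree by two units of $\pm1$, and it gains the two new neighbours $w_{j-1}$ and $w_j$. So $\deg_{G'}(v_j) \equiv \deg_G(v_j) \pmod 2$, and every other vertex is untouched. Thus $G$ is even if and only if $G'$ is even. Combined with the determinant identity and \cref{thm:oddmatchings}, this gives the lemma.

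The main obstacle I expect is the bookkeeping in small or degenerate cases, rather than the algebra itself. When $k=2$, the "cycle" toggles the single pair $v_1v_2$ twice. The computation above still works: the two flips cancel in $N$, and the two contributions $b_1b_1^T = b_2b_2^T$ cancel in $BB^T$. The case $k=1$ would create a loop, so I would assume $k\ge 2$, or simply note that the identity $N+BB^T=M_G$ holds formally provided the toggles are interpreted mod $2$ exactly as in the definition. If one prefers to stay within matchings, an alternative is to split matchings of $G'$ by how each $w_i$ is matched, but the Schur complement route is much cleaner and is the one I would write up.
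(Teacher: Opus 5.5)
Your argument is correct, and it takes a genuinely different route from the paper. The paper never touches matrices here. It works directly with hitting sets: it takes a proper hitting set $S'$ of $G'$, deletes the $w_i$, and checks by a local case analysis at each $v_i$ that the parity at $v_i$ survives undoing the toggles. The case split is on whether $v_i$ is pressed, which forces how many of $\{v_{i-1},w_{i-1}\}$ and of $\{w_i,v_{i+1}\}$ are pressed. For the converse it lifts a proper hitting set of $G$ to $G'$.

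Your Schur complement computation replaces all of this with the single identity $N+BB^T=M_G$. In fact this says $M_{G'}=L\,(M_G\oplus I_k)\,L^T$ with $L=\left(\begin{smallmatrix} I & B\\ 0 & I\end{smallmatrix}\right)$. So you get more than $\det M_{G'}=\det M_G$: the two matrices have the same nullity, hence $G$ and $G'$ have the same number of Lights Out solutions. Your computation also shows why the operation works: each $v_j$ lies in exactly two of the $b_i$, so $BB^T$ has zero diagonal. The same argument extends verbatim to any set of pairwise nonadjacent auxiliary vertices whose neighborhoods have even size and cover every vertex an even number of times, toggling each pair lying in an odd number of those neighborhoods. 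The price is that you rely on the algebraic characterization of extremality rather than staying inside the game.

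Your route also avoids a bookkeeping slip in the paper's converse. The lift $S'=S\cup\{w_i : v_i,v_{i+1}\in S\}$ leaves $w_i$ untoggled when neither $v_i$ nor $v_{i+1}$ lies in $S$. The correct rule puts $w_i$ in $S'$ exactly when $v_i,v_{i+1}$ are both in or both out of $S$. For example, the hitting set $\{v_1\}$ of $C_3$ must lift to $\{v_1,w_2\}$ in the resulting $C_6$. Finally, your mod-$2$ reading of the toggles when $k=2$ is the right one. Under the literal reading ``remove if in $E(G)$'', the non-extremal $K_2$ would become the extremal $C_4$.
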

\begin{proof}
    ($\implies$) Suppose $G$ is extremal. Suppose by contradiction that $G'$ admits a proper hitting set $S' \subseteq V(G')$. We claim that $S = S' \setminus \{w_1,\ldots,w_k\} \subseteq V(G)$ is a hitting set for $G$.  
    
    Consider any $v_i$ in $G'$ and suppose $v_i$ is hit. Consider the pair of vertices $\{v_{i-1},w_{i-1}\}$. To ensure that $w_{i-1}$ is toggled, it must be the case that either both or neither of them are hit. The same is true for the pair $\{w_i,v_{i+1}\}$. On reversing the sun-cycle operation to obtain $G$, it is easy to see that the parity of $v_i$ is preserved.
    
    Now suppose $v_i$ is not hit. To ensure that $w_{i-1}$ and $w_i$ are toggled, it must be the case that exactly one of $\{v_{i-1},w_{i-1}\}$ and exactly one of $\{w_i,v_{i+1}\}$ has been hit. Reversing the sun-cycle operation to obtain $G$, the parity of $v$ is again preserved. 
    
    So it follows that $S$ is a hitting set for $G$. Since $G$ is extremal, it must be the case that $S = V(G)$. Since $S'$ is a hitting set for $G'$ and each $w_i$ has exactly two hit neighbors in $V(G)$, it must be the case that $w_i$ is also hit. It follows that $S' = V(G')$, a contradiction.
    
    ($\impliedby$) Suppose $G'$ is extremal. Suppose for contradiction that $G$ admits a proper hitting set $S$. Then set $S' = S \cup \{w_i \colon v_i, v_{i+1} \in S\}$. By similar parity arguments as the previous direction, $S'$ is a hitting set for $G'$, which implies that $S' = V(G')$ and in turn forces $S = V(G)$, a contradiction.
\end{proof}

Unfortunately, the operations presented in this section are not complete: not all extremal graphs can be generated from these operations starting from a set of `simple' extremal graphs like the set of graphs with no cycle of length $0$ mod $3$. It would be interesting to find a complete structural characterization of the extremal graphs. 

\textbf{Acknowledgments:} We would like to thank Noga Alon, Matija Bucić, Chayim Lowen, and Paul Seymour for helpful discussions.

\bibliography{ref}
\bibliographystyle{amsalpha}
\end{document}